\newtheorem{theorem}{Theorem}[section]
\newtheorem{lemma}[theorem]{Lemma}
\newtheorem{corollary}[theorem]{Corollary}
\theoremstyle{definition}
\theoremstyle{remark}
\newtheorem{remark}[theorem]{Remark}
\numberwithin{equation}{section}
\begin{document}

\setcounter{page}{1}

\title[Some inequalities for $P$-class functions]{Some inequalities for $P$-class functions}

\author[I. Nikoufar, D. Saeedi]{Ismail Nikoufar$^1$$^{*}$ and Davuod Saeedi$^2$}

\address{$^{1}$Department of Mathematics, Payame Noor University, P.O. Box 19395-3697 Tehran, Iran.}
\email{\textcolor[rgb]{0.00,0.00,0.84}{nikoufar@pnu.ac.ir}}

\address{$^{2}$Department of Mathematics, Payame Noor University, P.O. Box 19395-3697 Tehran, Iran.}
\email{\textcolor[rgb]{0.00,0.00,0.84}{dsaeedi3961@gmail.com}}



\subjclass[2010]{47A63, 47A60, 26D15.}

\keywords{ $P$-class function, Jensen's inequality, Hermite-Hadamard's inequality, H\"{o}lder-MacCarthy inequality.}

\date{\today
\newline \indent $^{*}$Corresponding author}

\begin{abstract}
In this paper, we provide some inequalities for
$P$-class functions and self-adjoint operators on a Hilbert space including
an operator version of the Jensen's inequality and
the Hermite-Hadamard's type inequality.
We improve the H\"{o}lder-MacCarthy inequality by providing an upper bound.
Some refinements of 
the Jensen type inequality for $P$-class functions will be of interest.
\end{abstract}

\maketitle

\section{Introduction and Preliminaries}
\noindent

%
%
%

Let $\mathcal{H}$ be a Hilbert space and $B(\mathcal{H})$ be the algebra of all bounded linear operators on
$\mathcal{H}$. We say that an operator $A$ in $B(\mathcal{H})$ is positive and write $A \geq 0$ if $\langle Ax, x\rangle \geq 0$
for all $x\in\mathcal{H}$. The spectrum of an operator $A\in B(\mathcal{H})$ is denoted by $Sp(A)$.
A function $f:\Bbb{I}\to\Bbb{R}$ is a $P$-class function on $\Bbb{I}$ if
\begin{equation}\label{pi-eq}
f(\lambda x+(1-\lambda)y)\leq f(x)+f(y),
\end{equation}
where $x,y\in\Bbb{I}$ and $\lambda\in [0,1]$.
Some properties of $P$-class functions can be found in \cite{Dragomir-SJM1995, Dragomir-BAMS1998}.
The set of all $P$-class functions contains the set of all convex functions and the set of all nonnegative monotone functions.
Every non-zero $P$-class function is nonnegative valued.
Indeed, choose $\lambda=0$ and fix $y_0\in \Bbb{I}$ in \eqref{pi-eq}. Hence,
$$
f(y_0)\leq f(x)+f(y_0),
$$
where $x\in\Bbb{I}$. Thus, $f(x)\geq 0$ for all $x\in\Bbb{I}$.

Jensen's inequality for  convex  functions  is  one  of  the  most  important
result in the theory of inequalities due to the fact that many other famous
inequalities are particular cases of this for appropriate choices of the function
involved. Mond and Pe$\check{c}$ari$\acute{c}$ established an operator version of
the Jensen inequality for a convex function in \cite{Mond-HJM1993} (see also
\cite{Furuta-Zagreb2005}) as follows:

\begin{theorem}\label{thm0}
Let $f:[m, M] \to \Bbb{R}$ be a continuous convex function. If
$x \in \mathcal{H}$, $\langle x,x\rangle = 1$, then for every self-adjoint operator $C$ such that $mI\leq C \leq MI$,
\begin{equation}\label{eq0-thm1}
f(\langle Cx, x\rangle) \leq \langle f(C)x,x\rangle.
\end{equation}
for each $x\in \mathcal{H}$ with $\langle x,x\rangle = 1$.
\end{theorem}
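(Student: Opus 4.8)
The plan is to combine the supporting-line characterization of convexity with the order-preserving property of the continuous functional calculus. First I would record two elementary facts. Since $mI\le C\le MI$, the spectrum satisfies $Sp(C)\subseteq[m,M]$, so $f(C)$ is well defined by the continuous functional calculus; and taking inner products against the unit vector $x$ in $mI\le C\le MI$ gives $m\le c\le M$, where $c:=\langle Cx,x\rangle$, so $f(c)$ makes sense. If $c=m$ then $\langle(C-mI)x,x\rangle=0$ with $C-mI\ge0$ forces $Cx=mx$, whence $\langle f(C)x,x\rangle=f(m)=f(c)$ and \eqref{eq0-thm1} holds trivially (similarly if $c=M$); so I may assume $c\in(m,M)$.

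Next, because $f$ is convex on $[m,M]$ and $c$ is an interior point, there is a supporting line at $c$: there exists $\alpha\in\mathbb{R}$ (any number between the one-sided derivatives $f'_-(c)$ and $f'_+(c)$, which are finite) with
\[
f(t)\ge f(c)+\alpha(t-c)\qquad\text{for all }t\in[m,M].
\]
In particular this scalar inequality holds at every point of $Sp(C)$. The key step is to lift it to an operator inequality: applying the continuous functional calculus to the self-adjoint operator $C$, and using that $g\ge h$ on $Sp(C)$ implies $g(C)\ge h(C)$ (since $g(C)-h(C)=(g-h)(C)\ge0$ when $g-h\ge0$ on the spectrum), we obtain
\[
f(C)\ge f(c)\,I+\alpha\,(C-c\,I).
\]
Taking the inner product with $x$, using $\langle x,x\rangle=1$ and $\langle Cx,x\rangle=c$, the linear term cancels and we are left with $\langle f(C)x,x\rangle\ge f(c)=f(\langle Cx,x\rangle)$, which is \eqref{eq0-thm1}.

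The only points requiring care are the existence and finiteness of the supporting slope — handled by reducing the endpoint cases to the eigenvector situation above — and the monotonicity of the functional calculus under the spectral order, which is the standard fact obtained either from the spectral theorem or by polynomial approximation together with positivity of $(g-h)(C)$. Neither is a genuine obstacle; the proof is short once these are in place. I would also remark that the same supporting-line argument, specialized to $\mathcal{H}$ finite dimensional and $C$ diagonal, recovers the classical discrete Jensen inequality, which is a useful sanity check.
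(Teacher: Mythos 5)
Your proof is correct: the supporting-line argument at the interior point $c=\langle Cx,x\rangle$, lifted to an operator inequality by the positivity-preserving functional calculus and then paired with the unit vector $x$, is the standard and complete proof of the Mond--Pe\v{c}ari\'{c} inequality, and your handling of the endpoint cases $c=m,M$ via the eigenvector observation is sound. The paper itself states this theorem without proof (citing Mond--Pe\v{c}ari\'{c} and Furuta et al.), but its own argument for the $P$-class analogue (Theorem \ref{thm1}) is built on exactly this template of placing a line below the function, passing to $l'(C)\leq \frac{1}{\lambda}f(C)$, and taking inner products, so your approach matches the method the paper relies on.
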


As a special case of Theorem \ref{thm0} we have the following
H\"{o}lder-MacCarthy inequality.

\begin{theorem}\label{thm00}
\cite[Theorem 2]{Agarwal-Dragomir-CMA2010}
Let $C$ be a self-adjoint positive operator on a Hilbert space $\mathcal{H}$. Then
\begin{itemize}
\item[(i)] $\langle C^rx,x\rangle\geq \langle Cx,x \rangle^r$ for all $r>1$ and $x\in \mathcal{H}$ with $\langle x,x\rangle = 1$;
\item[(ii)] $\langle C^rx,x\rangle\leq \langle Cx,x \rangle^r$ for all $0<r<1$ and $x\in \mathcal{H}$ with $\langle x,x\rangle = 1$;
\item[(i)] If $C$ is invertible, then $\langle C^rx,x\rangle\geq \langle Cx,x \rangle^r$ for all $r<0$ and $x\in \mathcal{H}$ with $\langle x,x\rangle = 1$.
\end{itemize}
\end{theorem}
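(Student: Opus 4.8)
The plan is to obtain all three inequalities as direct consequences of the Mond--Pe\v{c}ari\'{c} inequality, Theorem \ref{thm0}, applied to the power function $f_r(t)=t^{r}$; the only real work is to pin down, in each range of $r$, an interval $[m,M]$ containing $Sp(C)$ on which $f_r$ (or $-f_r$) is continuous and convex, and then read off \eqref{eq0-thm1}.

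First I would handle (i). Since $C$ is positive and bounded we may take $m=\min Sp(C)\ge 0$ and $M=\max Sp(C)$, so that $mI\le C\le MI$ and $Sp(C)\subseteq[m,M]\subseteq[0,\infty)$. For $r>1$ the map $f_r(t)=t^{r}$ is continuous on $[0,\infty)$ and convex there, since $f_r''(t)=r(r-1)t^{r-2}\ge 0$. Given $x\in\mathcal{H}$ with $\langle x,x\rangle=1$, the scalar $\langle Cx,x\rangle$ lies in $[m,M]$, so Theorem \ref{thm0} applies and gives $\langle Cx,x\rangle^{r}=f_r(\langle Cx,x\rangle)\le\langle f_r(C)x,x\rangle=\langle C^{r}x,x\rangle$, which is (i). For (ii), the same $f_r$ with $0<r<1$ is concave on $[0,\infty)$ (now $r(r-1)<0$), hence $-f_r$ is continuous and convex on $[m,M]$; applying Theorem \ref{thm0} to $-f_r$ and multiplying by $-1$ reverses the inequality to $\langle C^{r}x,x\rangle\le\langle Cx,x\rangle^{r}$.

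For (iii), invertibility of $C$ means $m=\min Sp(C)>0$, so $Sp(C)\subseteq[m,M]\subseteq(0,\infty)$ and $C^{r}$ is well defined through the continuous functional calculus for every real $r$. For $r<0$ one has $f_r''(t)=r(r-1)t^{r-2}>0$ on $(0,\infty)$, so $f_r$ is convex on $[m,M]$, and Theorem \ref{thm0} again yields $\langle Cx,x\rangle^{r}\le\langle C^{r}x,x\rangle$. There is no genuine obstacle in the argument; the only points needing attention are the bookkeeping ones: that $[m,M]$ stays inside the domain on which $t^{r}$ is continuous (automatic when $r>0$, but requiring $m>0$ and hence the invertibility hypothesis when $r<0$), and that $\langle Cx,x\rangle\in[m,M]$, which follows from $mI\le C\le MI$ and $\langle x,x\rangle=1$. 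When $C$ is non-invertible in (i)--(ii) and $\langle Cx,x\rangle=0$, both sides vanish and the Jensen argument above still applies since $0^{r}=0$ for $r>0$.
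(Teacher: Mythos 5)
Your derivation is correct and is exactly the route the paper intends: it states Theorem \ref{thm00} as a special case of the Mond--Pe\v{c}ari\'{c} inequality (Theorem \ref{thm0}), obtained by specializing to $f_r(t)=t^r$ on $[m,M]\supseteq Sp(C)$, using convexity for $r>1$ and $r<0$ (the latter requiring invertibility so that $m>0$) and concavity for $0<r<1$. No substantive differences.
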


In this paper, we show that many general inequalities can be given for
$P$-class functions and self-adjoint operators on a Hilbert space including
an operator version of the Jensen's inequality and
the Hermite-Hadamard's type inequality for $P$-class functions.
We improve the H\"{o}lder-MacCarthy inequality by providing an upper bound.

%

\section{Mond and Pe$\check{c}$ari$\acute{c}$ inequality for $P$-class functions and its application}
Taking into account Theorem \ref{thm0} and its applications for various concrete
examples of convex functions, it is therefore natural to investigate the corresponding
results for the case of $P$-class functions and its special cases.

\begin{theorem} \label{thm1}
Let $C$ be a self-adjoint operator on
the Hilbert space $\mathcal{H}$ and assume that $Sp(C)\subseteq [m,M]$ for some scalars $m,M$ with
$m < M$. If $f$ is a continuous $P$-class function on $[m, M]$, then
\begin{equation}\label{eq0-thm1}
f(\langle Cx, x\rangle) \leq 2\langle f(C)x,x\rangle
\end{equation}
for each $x\in \mathcal{H}$ with $||x||=1$. 
\end{theorem}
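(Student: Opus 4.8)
The plan is to mimic the classical proof of the Mond--Pe\v{c}ari\'{c} inequality but to exploit the weaker hypothesis \eqref{pi-eq} in place of convexity. By the spectral theorem for the self-adjoint operator $C$ with $Sp(C)\subseteq[m,M]$, there is a spectral measure $E$ on $[m,M]$ such that $C=\int_{[m,M]}t\,dE(t)$ and, for a unit vector $x$, $\langle f(C)x,x\rangle=\int_{[m,M]}f(t)\,d\mu_x(t)$ where $\mu_x(B)=\langle E(B)x,x\rangle$ is a probability measure on $[m,M]$. Writing $\alpha:=\langle Cx,x\rangle=\int t\,d\mu_x(t)\in[m,M]$, the inequality to be proved becomes
\begin{equation*}
f(\alpha)\leq 2\int_{[m,M]}f(t)\,d\mu_x(t).
\end{equation*}

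First I would observe that the interior point $\alpha$ can be written as a convex combination of the endpoints: $\alpha=\lambda m+(1-\lambda)M$ with $\lambda=\frac{M-\alpha}{M-m}\in[0,1]$. Applying the $P$-class inequality \eqref{pi-eq} directly gives the pointwise bound $f(\alpha)\leq f(m)+f(M)$. The crux is then to bound $f(m)+f(M)$ by $2\int f\,d\mu_x$. This is where the main obstacle lies: a general probability measure $\mu_x$ need not give much mass near the endpoints, so $f(m)+f(M)$ cannot be controlled by $\int f\,d\mu_x$ without more structure. The resolution I would pursue is to use \eqref{pi-eq} a second time on the integrand: for $\mu_x$-almost every $t\in[m,M]$, write $t=\frac{M-t}{M-m}\,m+\frac{t-m}{M-m}\,M$, so that again $f(t)\leq f(m)+f(M)$; but this gives an inequality in the wrong direction.

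The correct route, and the one I expect the authors take, is instead to exploit that every $P$-class function is \emph{nonnegative} (as noted in the Introduction: $f(x)\geq 0$ for all $x$). Then one can afford to be wasteful. Split the interval at the point $\alpha$: for $t\in[m,\alpha]$ write $\alpha$ as a convex combination involving $t$ and $M$ in a way that isolates $f(t)$, or more simply, for \emph{any} fixed $t_0\in[m,M]$, \eqref{pi-eq} with a suitable $\lambda$ yields $f(\alpha)\leq f(t)+f(t_0)$ for appropriate pairs. Concretely: for each $t\in[m,M]$, the point $\alpha$ lies in the convex hull either of $\{t,M\}$ (when $t\leq\alpha$) or of $\{m,t\}$ (when $t\geq\alpha$); in the first case \eqref{pi-eq} gives $f(\alpha)\leq f(t)+f(M)$, in the second $f(\alpha)\leq f(m)+f(t)$. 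Integrating the relation $f(\alpha)\leq f(t)+f(\xi_t)$ (where $\xi_t\in\{m,M\}$) against $\mu_x$ gives $f(\alpha)\leq \int f(t)\,d\mu_x(t)+\int f(\xi_t)\,d\mu_x(t)$, and since $f(\xi_t)\leq f(m)+f(M)$ is not quite what we want, one instead notes $f(\xi_t)\leq f(\alpha)$ would again go the wrong way.

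The cleanest argument: apply \eqref{pi-eq} with $x=m$, $y=M$ and $\lambda$ chosen to hit $\alpha$, giving $f(\alpha)\leq f(m)+f(M)$; then apply \eqref{pi-eq} once more to each of $f(m)$ and $f(M)$ is unnecessary. Instead, bound $f(m)\leq\frac{1}{\mu_x([m,\alpha])}\int_{[m,\alpha]}f\,d\mu_x$? That fails too without monotonicity. I therefore expect the actual proof to be the short one: since $Sp(C)\subseteq[m,M]$, for each unit vector $x$ one has $\langle Cx,x\rangle\in[m,M]$, and writing this as $\langle Cx,x\rangle=\langle C x,x\rangle\cdot 1 + 0$, one applies \eqref{pi-eq} in the operator-monotone/functional-calculus form: the map $t\mapsto f(t)$ satisfies $f(t)\leq f(m)+f(M)$ for all $t$, hence $f(C)\leq(f(m)+f(M))I$ in the operator order, so $\langle f(C)x,x\rangle\leq f(m)+f(M)$; but this is an \emph{upper} bound on the right side, not a lower one. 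The genuinely correct mechanism must be the two-point splitting with the \emph{average} of the two endpoint inequalities $f(\langle Cx,x\rangle)\le f(m)+f(M)$ combined with $f(m)+f(M)\le 2\langle f(C)x,x\rangle$ holding because\dots{} — and here I would look for a Jensen-type reversal: since $m$ and $M$ are the extreme points, $\langle f(C)x,x\rangle$ is \emph{at least} the value $\min$ over the segment only if $f$ were concave, which it is not. I expect the main obstacle to be exactly reconciling these directions, and the likely fix is an additional hypothesis hidden in the functional calculus or a clever choice of $\lambda$ depending on $t$ inside the integral; I would spend my effort constructing, for each $t\in Sp(C)$, an explicit $\lambda(t)\in[0,1]$ with $\lambda(t)m+(1-\lambda(t))M = \langle Cx,x\rangle$ is impossible pointwise, so instead $\lambda(t)t+(1-\lambda(t))s(t)=\langle Cx,x\rangle$ for a partner $s(t)\in Sp(C)$, pair up $t$ with $2\alpha-t$ (reflected point, which lies in $[m,M]$ when $t$ is within distance $\min(\alpha-m,M-\alpha)$ of $\alpha$), apply \eqref{pi-eq} to get $f(\alpha)\le f(t)+f(2\alpha-t)$, integrate, and use nonnegativity of $f$ together with a symmetrization of $\mu_x$ to absorb the reflected integral into $2\langle f(C)x,x\rangle$.
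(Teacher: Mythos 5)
Your proposal does not arrive at a proof: after correctly reducing the statement to $f(\alpha)\leq 2\int f\,d\mu_x$ for the spectral distribution $\mu_x$ with barycenter $\alpha=\langle Cx,x\rangle$, every mechanism you try is either abandoned by you as going ``the wrong way'' or left with an unjustified key step. The reflection idea is the only candidate you retain, and it has two concrete gaps. First, the partner point $2\alpha-t$ lies in $[m,M]$ only for $t$ within distance $\min(\alpha-m,\,M-\alpha)$ of $\alpha$; outside that window the inequality $f(\alpha)\leq f(t)+f(2\alpha-t)$ is not available, so you cannot integrate it over all of $[m,M]$. Second, and more fundamentally, integrating $f(\alpha)\leq f(t)+f(2\alpha-t)$ against $\mu_x$ yields $f(\alpha)\leq \int f\,d\mu_x+\int f(2\alpha-t)\,d\mu_x(t)$, and the second term is the integral of $f$ against the \emph{reflected} measure, which is not $\langle f(C)x,x\rangle$: $\mu_x$ need not be symmetric about its barycenter (take $\mu_x=p\,\delta_{t_1}+(1-p)\,\delta_{t_2}$ with $p\neq\frac{1}{2}$), and no ``symmetrization'' is supplied that would absorb it into another copy of $\langle f(C)x,x\rangle$. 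Nonnegativity of $f$ lets you discard terms, not replace $f(2\alpha-t)$ by $f(t)$.

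The paper's proof uses a Mond--Pe$\check{c}$ari$\acute{c}$-style device that you circled around but did not find: from the $P$-class condition in the form $f(\lambda x+(1-\lambda)y)-f(y)\leq f(x)$ one extracts a slope (a minimum of difference quotients) for which the corresponding affine function $l$ satisfies $l\leq\frac{1}{\lambda}f$ on $[m,M]$; one then passes to the parallel line $l'$ through the point $(\bar g,f(\bar g))$ with $\bar g=\langle Cx,x\rangle$ (with a piecewise-linear correction in the case where $l'$ exceeds $\frac{1}{\lambda}f$ somewhere), applies the functional calculus to this affine minorant to get $f(\bar g)\leq l'(\bar g)=\langle l'(C)x,x\rangle\leq\frac{1}{\lambda}\langle f(C)x,x\rangle$, and finally takes $\lambda=\frac{1}{2}$, which the authors also show is optimal. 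The idea you are missing is that the factor $2$ comes from a supporting \emph{line} for $\frac{1}{\lambda}f$ passing through $(\langle Cx,x\rangle, f(\langle Cx,x\rangle))$, to which the operator functional calculus applies exactly because it is affine; no pointwise pairing of spectral points is needed.
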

\begin{proof}
Since $f$ is $P$-class,
\begin{equation}\label{eq1-thm1}
f(\lambda x+(1-\lambda)y)-f(y)\leq f(x)
\end{equation}
for every $x,y\in[m,M]$, and $\lambda\in (0,1)$.
Consider
\begin{equation}\label{eq2-thm1}
\alpha:=\min_{y\in[m,M]}\frac{f(\lambda x+(1-\lambda)y)-f(y)}{\lambda(x-y)}.
\end{equation}
It follows from \eqref{eq1-thm1} that $\alpha\lambda(x-y)\leq f(x)$ and so $\alpha(x-y)\leq \frac{1}{\lambda}f(x)$.
Notice that $l(x):=\alpha(x-y)$ is a linear equation and $l(x)\leq \frac{1}{\lambda}f(x)$ for every $x\in[m,M]$.
By assumption, $m\leq \bar{g}\leq M$ where $\bar{g}=\langle Cx,x\rangle$.
Consider the straight line $l'(x):=\alpha(x-\bar{g})+f(\bar{g})$ passing through the point
$(\bar{g},f(\bar{g}))$ and parallel to the line $l$.
By continuity of $f$, we get
\begin{equation}\label{eq3-thm1}
l'(\bar{g})\geq f(\bar{g})-\epsilon
\end{equation}
for arbitrary $\epsilon > 0$.
We realize two cases:

(i) Let $l'(x)\leq \frac{1}{\lambda}f(x)$ for every $x\in[m,M]$.
Then, $l'(C)\leq \frac{1}{\lambda}f(C)$.  Hence,
\begin{equation}\label{eq4-thm1}
\langle l'(C)x,x \rangle\leq \frac{1}{\lambda}\langle f(C)x,x \rangle.
\end{equation}
By using \eqref{eq3-thm1}, \eqref{eq4-thm1} and linearity of $l'$, we observe that
\begin{align*}
f(\langle Cx,x\rangle)-\epsilon\leq l'(\langle Cx,x\rangle)=\langle l'(C)x,x\rangle\leq \frac{1}{\lambda}\langle f(C)x,x \rangle.
\end{align*}
Since $\epsilon$ is arbitrary, we deduce
\begin{equation}\label{eq5-thm1}
f(\langle Cx,x\rangle)\leq \frac{1}{\lambda}\langle f(C)x,x \rangle.
\end{equation}

(ii) There exits some points $x\in[m,M]$ such that $l'(x)>\frac{1}{\lambda}f(x)$.
Let
$$
A:=\{x\in [m,\bar{g}]: l'(x)>\frac{1}{\lambda}f(x)\},
$$
$$
B:=\{x\in [\bar{g},M]: l'(x)>\frac{1}{\lambda}f(x)\}.
$$
Consider $x_A:=\max\{x: x\in A\}$ and $x_B:=\min\{x: x\in B\}$.
Let $l_A$ be the line passing through the points $(x_A,0)$ and $(\bar{g},f(\bar{g}))$
and $l_B$ the line passing through the points $(x_B,0)$ and $(\bar{g},f(\bar{g}))$.
Define
%
$$
L(x):=\Big\{
\begin{array}{ll}
l_A(x), x\in [m,\bar{g}],\\
l_B(x), x\in [\bar{g},M].
\end{array}
$$
We show that $L(x)\leq \frac{1}{\lambda}f(x)$ for every $x\in[m, M]$.
We consider the partition $\{m, x_A, \bar{g}, x_B, M\}$ for the closed interval $[m, M]$.
Note that $l_A(x)\leq 0$ for every $x\in[m, x_A]$ and since $f(x)\geq 0$, we reach
$l_A(x)\leq \frac{1}{\lambda}f(x)$ for every $x\in[m, x_A]$.
On the other hand, one clearly has
\begin{equation}\label{main1}
l'(x)\leq \frac{1}{\lambda}f(x)
\end{equation}
for every $x\in (x_A, \bar{g}]$, otherwise,
there exists $x_0\in (x_A, \bar{g}]$ such that $l'(x_0)> \frac{1}{\lambda}f(x_0)$.
This infers $x_0\in A$ and so $x_0< x_A$, which is a contradiction.
So, by letting $x$ tends to $x_A$ from right in \eqref{main1}, one can deduce $l'(x_A)\leq \frac{1}{\lambda}f(x_A)$.
Moreover, since $x_A\in \bar{A}$, $l'(x_A)\geq \frac{1}{\lambda}f(x_A)$ and hence $l'(x_A)=\frac{1}{\lambda}f(x_A)$.
It follows that $l'$ is the line passing through the points $(x_A,\frac{1}{\lambda}f(x_A))$ and $(\bar{g},f(\bar{g}))$
and the slope of $l'$ is $\alpha=\frac{f(\bar{g})-\frac{1}{\lambda}f(x_A)}{\bar{g}-x_A}$, where
the slope of $l_A$ is $\alpha'=\frac{f(\bar{g})}{\bar{g}-x_A}$.
By the inequality \eqref{main1} we have
$$
l_A(x)=\alpha'(x-\bar{g})+f(\bar{g})\leq \alpha(x-\bar{g})+f(\bar{g})=l'(x)\leq \frac{1}{\lambda}f(x)
$$
for every $x\in (x_A, \bar{g}]$.
So, $L(x)=l_A(x)\leq\frac{1}{\lambda}f(x)$ for every $x\in [m, \bar{g}]$.

By the same way, one has $L(x)=l_B(x)\leq\frac{1}{\lambda}f(x)$ for every $x\in [\bar{g}, M]$.
Note that $l_A(\bar{g})=l_B(\bar{g})$ and since $f$ is continuous,
\begin{equation}
l_A(\bar{g})\geq f(\bar{g})-\epsilon
\end{equation}
for arbitrary $\epsilon > 0$. For the case where $\textrm{sp}(C)\subseteq [m,\bar{g}]$,
\begin{align*}
f(\langle Cx,x\rangle)-\epsilon\leq l_A(\langle Cx,x\rangle)=\langle l_A(C)x,x\rangle\leq \frac{1}{\lambda}\langle f(C)x,x \rangle.
\end{align*}
Moreover, when $\textrm{sp}(C)\subseteq [\bar{g},M]$, we have
\begin{align*}
f(\langle Cx,x\rangle)-\epsilon\leq l_A(\langle Cx,x\rangle)=l_B(\langle Cx,x\rangle)=\langle l_B(C)x,x\rangle\leq \frac{1}{\lambda}\langle f(C)x,x \rangle
\end{align*}
and so we obtain \eqref{eq5-thm1}. According to \eqref{eq5-thm1} and for $\lambda=\frac{1}{2}$ we deduce \eqref{eq0-thm1}.
We claim that $\frac{1}{2}$ is the best possible for $\lambda$ in \eqref{eq5-thm1}.

(1) Let $0<\lambda\leq\frac{1}{2}$. So, $\frac{1}{\lambda}\geq2$ and consequently by \eqref{eq0-thm1}, we deduce
\begin{equation}\label{eq6-thm1}
f(\langle Cx, x\rangle) \leq 2\langle f(C)x,x\rangle < \frac{1}{\lambda}\langle f(C)x,x\rangle.
\end{equation}

(2) Let $\frac{1}{2}<\lambda<1$ and note that
the function $g(t)=\frac{2-t^2}{\alpha}$, $t\in[-1,1]$, is a $P$-class function for every $\alpha\geq 1$.
Consider
$
C=\Big[
\begin{array}{ll}
-1 & 0\\
0 & 1
\end{array}
 \Big]
$
and $x=(\frac{1}{\sqrt{2}},\frac{1}{\sqrt{2}})$.
Then, $g(\langle Cx, x\rangle)=g(0)=\frac{2}{\alpha}$ and $\langle g(C)x,x\rangle=\frac{1}{\alpha}$.
Since $g$ is $P$-class, by \eqref{eq5-thm1}, we have
$g(\langle Cx, x\rangle) \leq \frac{1}{\lambda}\langle g(C)x,x\rangle$ and so $\lambda\leq\frac{1}{2}$
which is a contradiction.
\end{proof}


\begin{corollary} \label{cor1}
Under the hypotheses of Theorem \ref{thm1},
if $x \in \mathcal{H}$, $||x||\neq 1$, then 
\begin{equation}\label{eq0-cor1}
f\Big(\frac{\langle Cx, x\rangle}{\langle x, x\rangle}\Big) \leq \frac{2\langle f(C)x,x\rangle}{\langle x, x\rangle}.
\end{equation}
\end{corollary}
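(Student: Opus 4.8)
The plan is to deduce \eqref{eq0-cor1} from Theorem \ref{thm1} by a homogenization (normalization) argument, so that no genuinely new work is required. First observe that the statement is only meaningful when $x\neq 0$, since otherwise $\langle x,x\rangle=0$ and neither side of \eqref{eq0-cor1} is defined; I would therefore begin by assuming $x\neq 0$. Put $y:=x/\|x\|$, so that $\|y\|=1$. A direct computation from the definition of $y$ gives
\[
\langle Cy,y\rangle=\frac{\langle Cx,x\rangle}{\|x\|^{2}}=\frac{\langle Cx,x\rangle}{\langle x,x\rangle}
\qquad\text{and}\qquad
\langle f(C)y,y\rangle=\frac{\langle f(C)x,x\rangle}{\langle x,x\rangle}.
\]

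Next I would verify that the left-hand side of \eqref{eq0-cor1} makes sense, i.e.\ that $\langle Cy,y\rangle$ lies in the domain $[m,M]$ of $f$. This is automatic: since $C$ is self-adjoint with $Sp(C)\subseteq[m,M]$ we have $mI\le C\le MI$, whence $m=m\langle y,y\rangle\le\langle Cy,y\rangle\le M\langle y,y\rangle=M$ because $\|y\|=1$. Consequently $f(\langle Cy,y\rangle)$ is well defined, and $f(C)$ is defined by the continuous functional calculus exactly as in the hypotheses of Theorem \ref{thm1}.

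Now apply Theorem \ref{thm1} to the unit vector $y$: since $f$ is a continuous $P$-class function on $[m,M]$ and $Sp(C)\subseteq[m,M]$, we obtain
\[
f(\langle Cy,y\rangle)\le 2\langle f(C)y,y\rangle.
\]
Substituting the two identities from the first step into this inequality gives
\[
f\!\left(\frac{\langle Cx,x\rangle}{\langle x,x\rangle}\right)\le\frac{2\langle f(C)x,x\rangle}{\langle x,x\rangle},
\]
which is exactly \eqref{eq0-cor1}. I do not expect any real obstacle: the whole content is that the inequality of Theorem \ref{thm1} is invariant under scaling of $x$, and the only points that merit a word of care are the non-degeneracy assumption $x\neq 0$ and the (automatic) membership $\langle Cx,x\rangle/\langle x,x\rangle\in[m,M]$.
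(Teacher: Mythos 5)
Your proposal is correct and follows exactly the same route as the paper, which simply sets $y:=x/\sqrt{\langle x,x\rangle}$ and applies Theorem \ref{thm1}; your extra remarks about $x\neq 0$ and the membership $\langle Cx,x\rangle/\langle x,x\rangle\in[m,M]$ are sound but not essentially new.
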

\begin{proof}
Let $y:=\frac{x}{\sqrt{\langle x, x\rangle}}$ and apply Theorem \ref{thm1}.
\end{proof}
%
\begin{lemma}\label{lemm1}
Let $f$ be a continuous $P$-class function and $\lambda<0$.
If $f$ is decreasing, then
\begin{equation}\label{eq-m1-lem1}
f((1-\lambda)x+\lambda y)\geq f(x)-f(y)
\end{equation}
for every $x,y\in [m,M]$ with $x<y$.
\end{lemma}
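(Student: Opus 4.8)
The plan is to reduce the claimed inequality to a single application of the defining $P$-class inequality \eqref{pi-eq}, by exhibiting $x$ as an honest convex combination of the two points $y$ and $z:=(1-\lambda)x+\lambda y$ that appear on the two sides.

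First I would set $z:=(1-\lambda)x+\lambda y$. Since $\lambda<0$ and $x<y$, one has $z=x+\lambda(y-x)<x<y$, so $x$ lies strictly between $z$ and $y$. Solving $x=\mu z+(1-\mu)y$ for $\mu$ gives $z-y=(1-\lambda)(x-y)$ and hence $\mu=\tfrac{1}{1-\lambda}$; because $\lambda<0$ we get $1-\lambda>1$, so $\mu\in(0,1)$ and $1-\mu=\tfrac{-\lambda}{1-\lambda}\in(0,1)$, i.e.\ this is a genuine convex combination.

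Next I would apply \eqref{pi-eq} with parameter $\mu$ to the pair $z,y$, obtaining $f(x)=f(\mu z+(1-\mu)y)\le f(z)+f(y)$, which rearranges to $f(z)\ge f(x)-f(y)$, i.e.\ exactly \eqref{eq-m1-lem1}. Alternatively, and more directly in the spirit of the hypotheses: since every non-zero $P$-class function is nonnegative valued, $f(x)-f(y)\le f(x)$, and since $f$ is decreasing and $z<x$ we have $f(z)\ge f(x)$; chaining these two estimates again yields the claim. This second route makes transparent why monotonicity is listed as a hypothesis, though the convex-combination argument shows it is not strictly needed for the inequality itself.

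The only point needing care is the domain: for $\lambda<0$ the point $z$ lies to the left of $[x,y]$ and may fall outside $[m,M]$, so that $f(z)$ is not a priori defined. I would therefore read the statement with the implicit understanding (consistent with the surrounding development) that $(1-\lambda)x+\lambda y$ still lies in the interval on which $f$ is $P$-class; with that understood, the two lines above close the proof, and continuity of $f$ is only invoked if $z$ happens to be an endpoint. This domain bookkeeping, rather than any analytic difficulty, is the main thing to get right.
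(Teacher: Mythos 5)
Your proposal is correct, and it actually contains two complete arguments. Your ``alternative'' route is precisely the paper's proof: since $\lambda<0$ and $x<y$, the point $z=(1-\lambda)x+\lambda y=x+\lambda(y-x)$ lies to the left of $x$, so monotonicity gives $f(z)\geq f(x)$, and nonnegativity of $P$-class functions gives $f(x)\geq f(x)-f(y)$. Your primary route is genuinely different and, in a sense, stronger: by writing $x=\mu z+(1-\mu)y$ with $\mu=\tfrac{1}{1-\lambda}\in(0,1)$ (the algebra checks out, since $\mu(1-\lambda)=1$ forces the coefficient of $y$ to vanish), a single application of the defining inequality \eqref{pi-eq} yields $f(x)\leq f(z)+f(y)$, i.e.\ \eqref{eq-m1-lem1}, with no monotonicity hypothesis at all. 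This shows the ``decreasing'' assumption in the lemma (and likewise the ``increasing'' assumption in Lemma \ref{lemm2}, which succumbs to the same convex-combination trick with $\mu=1-\tfrac{1}{\lambda}$) is not needed for the inequality itself, which would let Theorem \ref{thm2} and its corollary be stated without monotonicity. Your caveat about the domain is also well taken and applies equally to the paper's argument: for $\lambda<0$ the point $(1-\lambda)x+\lambda y$ can leave $[m,M]$, and one must assume it stays in the interval on which $f$ is defined and $P$-class --- exactly the hypothesis $\frac{ua-\langle Cx,x\rangle}{u-\langle x,x\rangle}\in[m,M]$ imposed in Theorem \ref{thm2}. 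Continuity is not actually used in either argument.
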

\begin{proof}
We have $(1-\lambda)x+\lambda y=x+\lambda(y-x)\leq x$.
Since $f$ is decreasing,
$$
f((1-\lambda)x+\lambda y)\geq f(x)\geq f(x)-f(y).
$$

\end{proof}

%
\begin{lemma}\label{lemm2}
Let $f$ be a continuous $P$-class function and $\lambda>1$.
If $f$ is increasing, then \eqref{eq-m1-lem1} holds,
\end{lemma}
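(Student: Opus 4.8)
The plan is to imitate, in the monotone‑increasing regime, the short argument used for Lemma~\ref{lemm1}. The first step is to record the algebraic identity $(1-\lambda)x+\lambda y=x+\lambda(y-x)$ and then exploit the hypotheses $\lambda>1$ and $x<y$: since $\lambda>1>0$ and $y-x>0$ we have $\lambda(y-x)\ge y-x>0$, hence
\[
(1-\lambda)x+\lambda y=x+\lambda(y-x)\ge x+(y-x)=y>x .
\]
Thus the point at which $f$ is evaluated on the left-hand side of \eqref{eq-m1-lem1} dominates $x$ (indeed, it even dominates $y$, which gives a slightly stronger intermediate estimate, though $f((1-\lambda)x+\lambda y)\ge f(x)$ is all that is needed).

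The second step uses monotonicity: because $f$ is increasing on $[m,M]$ and $(1-\lambda)x+\lambda y\ge x$, we get $f\big((1-\lambda)x+\lambda y\big)\ge f(x)$. The third step invokes the fact recalled in the Introduction that a non-zero $P$-class function is nonnegative valued; in particular $f(y)\ge 0$, so $f(x)\ge f(x)-f(y)$. Chaining these two inequalities yields
\[
f\big((1-\lambda)x+\lambda y\big)\ge f(x)\ge f(x)-f(y),
\]
which is precisely \eqref{eq-m1-lem1}.

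I expect no real obstacle: the statement is the mirror image of Lemma~\ref{lemm1}, with ``decreasing'' replaced by ``increasing'', ``$\lambda<0$'' replaced by ``$\lambda>1$'', and the comparison $(1-\lambda)x+\lambda y\le x$ replaced by $(1-\lambda)x+\lambda y\ge x$. The only point to keep an eye on is that $(1-\lambda)x+\lambda y$ should lie in the domain of $f$; exactly as in Lemma~\ref{lemm1} this is understood to hold whenever the displayed inequality is asserted, and it is automatic in the intended applications where $f$ is monotone on a sufficiently large interval. The continuity hypothesis is in fact not used for this particular inequality and is retained only for uniformity with the companion lemma and with the later applications.
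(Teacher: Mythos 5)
Your proposal is correct and follows essentially the same route as the paper: both show $(1-\lambda)x+\lambda y\ge y$ from $\lambda>1$ and $x<y$, then apply monotonicity of $f$ and the nonnegativity of $P$-class functions to conclude $f((1-\lambda)x+\lambda y)\ge f(x)\ge f(x)-f(y)$. The only cosmetic difference is that the paper inserts $f(y)$ as an intermediate term in the chain, which you note but bypass.
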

\begin{proof}
We have $(1-\lambda)x>(1-\lambda)y$ and so $(1-\lambda)x+\lambda y \geq y$.
Since $f$ is increasing, we obtain
$$
f((1-\lambda)x+\lambda y)\geq f(y)\geq f(x)\geq f(x)-f(y).
$$

\end{proof}

%

\begin{theorem}\label{thm2}
Let $f:[m, M]\to\Bbb{R}$ be a continuous decreasing $P$-class function and let
the self-adjoint operator $C$ satisfies $mI\leq C \leq MI$.
If $0 < \langle x,x \rangle < u$, $x\in\mathcal{H}$,  
$a\in [m, M]$, and $\frac{ua - \langle Cx, x\rangle}{u - \langle x,x\rangle}\in
[m, M]$, then
\begin{equation}\label{eq0-thm2}
f\Big(\frac{ua - \langle Cx, x\rangle}{u - \langle x,x\rangle}\Big)\geq f(a)-\frac{2\langle f(C)x, x\rangle}{\langle x,x\rangle}.
\end{equation}
\end{theorem}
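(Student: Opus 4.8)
The plan is to reduce \eqref{eq0-thm2} to Lemma~\ref{lemm1} and Corollary~\ref{cor1} through an affine reparametrisation, after splitting into two cases according to the position of $a$ relative to $\bar{g}:=\frac{\langle Cx,x\rangle}{\langle x,x\rangle}$. First I would note that, since $mI\le C\le MI$, the unit vector $y:=x/\sqrt{\langle x,x\rangle}$ satisfies $\langle Cy,y\rangle=\bar{g}$, so $\bar{g}\in[m,M]$, and Corollary~\ref{cor1} (or Theorem~\ref{thm1} when $\langle x,x\rangle=1$) yields
\begin{equation}\label{plan-cor}
f(\bar{g})\le\frac{2\langle f(C)x,x\rangle}{\langle x,x\rangle}.
\end{equation}
Writing $t:=\langle x,x\rangle$ and $\lambda:=-\dfrac{t}{u-t}$, the hypothesis $0<t<u$ makes $\lambda<0$, and a short computation gives $1-\lambda=\dfrac{u}{u-t}$, hence
\[
(1-\lambda)a+\lambda\bar{g}=\frac{ua-t\bar{g}}{u-t}=\frac{ua-\langle Cx,x\rangle}{u-\langle x,x\rangle},
\]
which is precisely the argument on the left-hand side of \eqref{eq0-thm2} and, by hypothesis, lies in $[m,M]$.

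If $a\le\bar{g}$, I would apply Lemma~\ref{lemm1} with the pair $(a,\bar{g})$ in place of $(x,y)$. When $a<\bar{g}$ this gives directly
\[
f\Big(\frac{ua-\langle Cx,x\rangle}{u-\langle x,x\rangle}\Big)=f\bigl((1-\lambda)a+\lambda\bar{g}\bigr)\ge f(a)-f(\bar{g}),
\]
and replacing $f(\bar{g})$ by the bound \eqref{plan-cor} produces \eqref{eq0-thm2}. The borderline subcase $a=\bar{g}$ is immediate, since then the right-hand side of \eqref{eq0-thm2} is at most $f(a)=f\Big(\frac{ua-\langle Cx,x\rangle}{u-\langle x,x\rangle}\Big)$, because $\langle f(C)x,x\rangle\ge0$ (the $P$-class function $f$ being nonnegative on $[m,M]$, so $f(C)\ge0$).

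If instead $a>\bar{g}$, monotonicity of $f$ gives $f(a)\le f(\bar{g})$, so together with \eqref{plan-cor} the right-hand side of \eqref{eq0-thm2} obeys
\[
f(a)-\frac{2\langle f(C)x,x\rangle}{\langle x,x\rangle}\le f(a)-f(\bar{g})\le0,
\]
whereas its left-hand side is $\ge0$ because a $P$-class function is nonnegative-valued and the argument $\frac{ua-\langle Cx,x\rangle}{u-\langle x,x\rangle}$ is assumed to lie in $[m,M]$. Hence \eqref{eq0-thm2} holds in this case as well.

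The only genuinely delicate point is this case distinction. Lemma~\ref{lemm1} is stated only for $x<y$, so it cannot be invoked directly when $a>\bar{g}$; in that regime one must instead observe that monotonicity of $f$ and \eqref{plan-cor} force the claimed lower bound to be nonpositive while the left-hand side of \eqref{eq0-thm2} stays nonnegative, so the inequality is automatic. Everything else is the affine bookkeeping that identifies $\frac{ua-\langle Cx,x\rangle}{u-\langle x,x\rangle}$ with $(1-\lambda)a+\lambda\bar{g}$ for $\lambda=-t/(u-t)$, followed by a single application of Corollary~\ref{cor1}.
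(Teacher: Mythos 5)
Your proof is correct and follows essentially the same route as the paper: the affine substitution $\lambda=-\langle x,x\rangle/(u-\langle x,x\rangle)$, Lemma~\ref{lemm1} applied to the pair $\bigl(a,\langle Cx,x\rangle/\langle x,x\rangle\bigr)$, and then Corollary~\ref{cor1}. The only difference is your case distinction according to whether $a$ is below, equal to, or above $\langle Cx,x\rangle/\langle x,x\rangle$; the paper's one-line proof invokes Lemma~\ref{lemm1} unconditionally even though that lemma is stated only for $x<y$, so your observation that for $a\geq\langle Cx,x\rangle/\langle x,x\rangle$ the right-hand side of \eqref{eq0-thm2} is nonpositive while the left-hand side is nonnegative is a welcome (and needed) completion of the argument for arbitrary $a\in[m,M]$.
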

\begin{proof}
Applying Lemma \ref{lemm1} with $\lambda=-\frac{\langle x,x\rangle}{u-\langle x,x\rangle}<0$, $x=a$, $y=\frac{\langle Cx, x\rangle}{\langle x,x\rangle}$, and Corollary \ref{cor1}, we find that
\begin{align}
f\Big(\frac{ua - \langle Cx, x\rangle}{u - \langle x,x\rangle}\Big)
&=f\Big(\frac{u}{u-\langle x,x\rangle}a -\frac{\langle x,x\rangle}{u-\langle x,x\rangle}\frac{\langle Cx, x\rangle}{\langle x,x\rangle}\Big)\nonumber\\
&\geq f(a)-f\Big(\frac{\langle Cx, x\rangle}{\langle x,x\rangle}\Big)\nonumber\\
&\geq f(a)-\frac{2\langle f(C)x, x\rangle}{\langle x,x\rangle}.
\end{align}
\end{proof}

%
\begin{corollary}
Under the hypotheses of Theorem \ref{thm2}, if $f$ is increasing, then 
\begin{equation}\label{eq0-cor2}
f\Big(\frac{ua - \langle Cx, x\rangle}{u - \langle x,x\rangle}\Big)\geq
f\Big(\frac{\langle Cx, x\rangle}{\langle x,x\rangle}\Big)-f(a).
\end{equation}
\end{corollary}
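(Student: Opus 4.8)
The plan is to imitate the proof of Theorem~\ref{thm2} almost verbatim, with two changes: invoke Lemma~\ref{lemm2} (the increasing/$\lambda>1$ companion of Lemma~\ref{lemm1}) in place of Lemma~\ref{lemm1}, and omit the final appeal to Corollary~\ref{cor1}, since the right-hand side of \eqref{eq0-cor2} no longer contains $\langle f(C)x,x\rangle$.

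First I would fix the multiplier. Set $\lambda:=\dfrac{u}{u-\langle x,x\rangle}$. Since $0<\langle x,x\rangle<u$ we have $0<u-\langle x,x\rangle<u$, hence $\lambda>1$, which is exactly the range required by Lemma~\ref{lemm2}. Then I would record the algebraic identity, using $1-\lambda=-\dfrac{\langle x,x\rangle}{u-\langle x,x\rangle}$:
\[
(1-\lambda)\,\frac{\langle Cx,x\rangle}{\langle x,x\rangle}+\lambda a
=-\frac{\langle x,x\rangle}{u-\langle x,x\rangle}\cdot\frac{\langle Cx,x\rangle}{\langle x,x\rangle}+\frac{u}{u-\langle x,x\rangle}\,a
=\frac{ua-\langle Cx,x\rangle}{u-\langle x,x\rangle}.
\]
I would also note that $mI\le C\le MI$ forces $\dfrac{\langle Cx,x\rangle}{\langle x,x\rangle}\in[m,M]$ (apply the operator inequality to the unit vector $x/\|x\|$), that $a\in[m,M]$ and $\dfrac{ua-\langle Cx,x\rangle}{u-\langle x,x\rangle}\in[m,M]$ by hypothesis, and that, exactly as in the proof of Theorem~\ref{thm2}, the two points fed into the lemma are comparable, with $\dfrac{\langle Cx,x\rangle}{\langle x,x\rangle}<a$ supplying the strict inequality ``$x<y$'' that Lemma~\ref{lemm2} needs.

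With all of this in hand, I would apply Lemma~\ref{lemm2} with this $\lambda>1$, taking $\dfrac{\langle Cx,x\rangle}{\langle x,x\rangle}$ in the role of ``$x$'' and $a$ in the role of ``$y$'', to obtain
\[
f\!\left(\frac{ua-\langle Cx,x\rangle}{u-\langle x,x\rangle}\right)
=f\!\left((1-\lambda)\frac{\langle Cx,x\rangle}{\langle x,x\rangle}+\lambda a\right)
\ \ge\ f\!\left(\frac{\langle Cx,x\rangle}{\langle x,x\rangle}\right)-f(a),
\]
which is precisely \eqref{eq0-cor2}.

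Because Lemma~\ref{lemm2} is already proved, there is no genuine obstacle; the content is purely bookkeeping. The one point needing care—the mirror image of the corresponding subtlety in Theorem~\ref{thm2}, where Lemma~\ref{lemm1} forced $\lambda<0$—is that Lemma~\ref{lemm2} forces $\lambda>1$. This pins the multiplier down to $\lambda=\dfrac{u}{u-\langle x,x\rangle}$ and interchanges the roles of $a$ and $\dfrac{\langle Cx,x\rangle}{\langle x,x\rangle}$ relative to the proof of Theorem~\ref{thm2}, so that the term subtracted on the right-hand side becomes $f(a)$ rather than $f\!\bigl(\tfrac{\langle Cx,x\rangle}{\langle x,x\rangle}\bigr)$, matching the stated form of \eqref{eq0-cor2}. (One should also keep in mind that, as with Theorem~\ref{thm2}, the comparability hypothesis on $a$ and $\tfrac{\langle Cx,x\rangle}{\langle x,x\rangle}$ is what actually makes the lemma applicable.)
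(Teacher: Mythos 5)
Your proposal is correct and follows essentially the same route as the paper: both apply Lemma~\ref{lemm2} with $\lambda=\frac{u}{u-\langle x,x\rangle}>1$, taking $\frac{\langle Cx,x\rangle}{\langle x,x\rangle}$ and $a$ in the roles of $x$ and $y$ respectively. Your observation that the appeal to Corollary~\ref{cor1} can be dropped (the paper cites it, but the right-hand side contains no $\langle f(C)x,x\rangle$ term) is a minor and accurate tidying, not a different argument.
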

\begin{proof}
Applying Lemma \ref{lemm2} with $\lambda=\frac{u}{u-\langle x,x\rangle}>1$, $x=\frac{\langle Cx, x\rangle}{\langle x,x\rangle}$, $y=a$, and Corollary \ref{cor1}, we obtain the result.
\end{proof}

%
\begin{theorem}\label{thm3}
Let the conditions of Theorem \ref{thm1} be satisfied. Then
\begin{equation}\label{eq0-thm3}
\langle f(C)x, x\rangle\leq f(m)+f(M).
\end{equation}
\end{theorem}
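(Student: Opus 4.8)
The plan is to bypass Theorem \ref{thm1} entirely and reduce the operator inequality \eqref{eq0-thm3} to an elementary scalar estimate that is then transported by the continuous functional calculus. The key observation is that the defining inequality \eqref{pi-eq} of a $P$-class function, when the two endpoints $m$ and $M$ are placed in the roles of $x$ and $y$, already produces a \emph{uniform} upper bound for $f$ on the whole interval. Indeed, for an arbitrary $t\in[m,M]$ one writes $t=\lambda m+(1-\lambda)M$ with $\lambda=\frac{M-t}{M-m}\in[0,1]$, and then \eqref{pi-eq} gives
\[
f(t)=f(\lambda m+(1-\lambda)M)\leq f(m)+f(M).
\]

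Next, since $f$ is continuous and $Sp(C)\subseteq[m,M]$, the scalar inequality $f(t)\leq f(m)+f(M)$ holds in particular on $Sp(C)$, so by monotonicity of the continuous functional calculus it lifts to the operator inequality $f(C)\leq\big(f(m)+f(M)\big)I$. Pairing this with the unit vector $x$ yields
\[
\langle f(C)x,x\rangle\leq\big(f(m)+f(M)\big)\langle x,x\rangle=f(m)+f(M),
\]
which is exactly \eqref{eq0-thm3}.

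I expect no real obstacle here: the whole content is the recognition that the $P$-class condition secretly encodes a global bound rather than merely a local convex-combination estimate. The only point that warrants a line of care is that the representation $t=\lambda m+(1-\lambda)M$ uses the endpoint values $\lambda=0$ and $\lambda=1$ when $t=M$ and $t=m$ respectively, and these are indeed admissible in \eqref{pi-eq} since $\lambda$ ranges over the closed interval $[0,1]$; continuity of $f$ is only needed so that $f(C)$ is defined via the functional calculus, not for the inequality itself.
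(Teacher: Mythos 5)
Your proof is correct and is essentially identical to the paper's: both write each $t\in[m,M]$ as the convex combination $\frac{M-t}{M-m}m+\frac{t-m}{M-m}M$, apply the $P$-class inequality to get the uniform scalar bound $f(t)\leq f(m)+f(M)$, and then lift it to $f(C)\leq (f(m)+f(M))I$ via the functional calculus before pairing with the unit vector $x$.
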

\begin{proof}
Let $u\in[m,M]$. Then $u=\frac{M-u}{M-m}m+\frac{u-m}{M-m}M$.
The function $f$ is $P$-class, so $f(u)\leq f(m)+f(M)$.
The operator $f(m)+f(M)-f(C)$ is positive, and hence, \eqref{eq0-thm3} follows.
\end{proof}

%
\begin{theorem}\label{thm4}
Let the conditions of Theorem \ref{thm1} be satisfied.
Let $J$ be an interval such that $f([m, M])\subset J$.
If $F(u, v)$ is a real function defined on $J\times J$ and non--decreasing in $u$, then
\begin{align}\label{eq0-thm4}
F(2\langle f(C)x,x\rangle,f(\langle Cx,x\rangle))&\leq\max_{t\in[m,M]}F(2(f(m)+f(M)),f(t))\nonumber\\
&=\max_{\theta\in[0,1]}F(2(f(m)+f(M)),f(\theta m+(1-\theta)M)).
\end{align}
\end{theorem}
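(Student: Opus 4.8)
The plan is to chain together the two operator inequalities already established in this section, namely Theorem \ref{thm1} and Theorem \ref{thm3}, and then to use nothing more than the assumed monotonicity of $F$ in its first slot together with a harmless reparametrisation of $[m,M]$.

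First I would record the three ingredients in play. By Theorem \ref{thm1}, $f(\langle Cx,x\rangle)\le 2\langle f(C)x,x\rangle$; by Theorem \ref{thm3}, $\langle f(C)x,x\rangle\le f(m)+f(M)$, and hence $2\langle f(C)x,x\rangle\le 2(f(m)+f(M))$; finally, since $mI\le C\le MI$ we have $\bar g:=\langle Cx,x\rangle\in[m,M]$, so that $f(\bar g)\in f([m,M])\subset J$ and the pair appearing on the left of \eqref{eq0-thm4} indeed lies in the region on which $F$ is considered. Note also that $f$, being a nonzero $P$-class function, is nonnegative, so all the first arguments involved lie in $[0,2(f(m)+f(M))]$.

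Next, since $F(u,v)$ is non-decreasing in $u$ and $2\langle f(C)x,x\rangle\le 2(f(m)+f(M))$, I would deduce
$$
F\bigl(2\langle f(C)x,x\rangle,\ f(\langle Cx,x\rangle)\bigr)\le F\bigl(2(f(m)+f(M)),\ f(\bar g)\bigr).
$$
Writing $t:=\bar g\in[m,M]$, the right-hand side is $F(2(f(m)+f(M)),f(t))$ for one particular $t\in[m,M]$, hence it does not exceed $\max_{t\in[m,M]}F(2(f(m)+f(M)),f(t))$; continuity of $f$ on the compact interval $[m,M]$ guarantees that this maximum is attained. This establishes the inequality in the first line of \eqref{eq0-thm4}.

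Finally, the equality in \eqref{eq0-thm4} is only a change of variable: the affine map $\theta\mapsto\theta m+(1-\theta)M$ is a bijection of $[0,1]$ onto $[m,M]$, so the sets $\{f(t):t\in[m,M]\}$ and $\{f(\theta m+(1-\theta)M):\theta\in[0,1]\}$ coincide, and therefore so do the two maxima. I do not expect a genuine obstacle in this argument; the only delicate point is the implicit assumption that $F$ is defined and non-decreasing in $u$ at the value $2(f(m)+f(M))$ as well as at $2\langle f(C)x,x\rangle$, which is to be read into the hypotheses by taking $J$ (or the first coordinate domain of $F$) large enough to contain the interval $[0,2(f(m)+f(M))]$.
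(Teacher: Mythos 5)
Your proof is correct and follows essentially the same route as the paper: apply Theorem \ref{thm3} together with the monotonicity of $F$ in its first argument to pass from $2\langle f(C)x,x\rangle$ to $2(f(m)+f(M))$, bound by the maximum over $t=\bar g\in[m,M]$, and finish with the affine change of variable $t=\theta m+(1-\theta)M$. Your remark about the domain of $F$ in its first slot is a fair observation about the statement's hypotheses, but it does not change the argument.
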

\begin{proof}
According to the non-decreasing character of $F$ and Theorem \ref{thm3}, we deduce
\begin{align*}
F(2\langle f(C)x,x\rangle,f(\langle Cx,x\rangle))
&\leq F(2(f(m)+f(M)),f(\bar{g}))\nonumber\\
&\leq \max_{t\in[m,M]}F(2(f(m)+f(M)),f(t))
\end{align*}
since $\bar{g} = \langle Cx,x\rangle\in [m, M]$.
The second form of the right side of \eqref{eq0-thm4} follows
at once from the change of variable $\theta=\frac{M-t}{M-m}$, so that
$t=\theta m+(1-\theta)M$, with $0\leq \theta\leq 1$.
\end{proof}

In the same way (or more simply just by replacing $F$ by $-F$ in the
above theorem) we can prove the following:
%
\begin{corollary}\label{cor3}
Under the same hypotheses as Theorem \ref{thm4}, except that $F$ is
non--increasing in its first variable, we have
\begin{align*}
F(2\langle f(C)x,x\rangle,f(\langle Cx,x\rangle))
&\geq\min_{t\in[m,M]}F(2(f(m)+f(M),f(t)))\nonumber\\
&=\min_{\theta\in[0,1]}F(2(f(m)+f(M)),f(\theta m+(1-\theta)M)).
\end{align*}
\end{corollary}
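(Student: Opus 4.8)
The plan is to mirror the proof of Theorem \ref{thm4}, simply reversing every inequality that the change in the monotonicity hypothesis forces. First I would invoke Theorem \ref{thm3}, which gives $2\langle f(C)x,x\rangle \leq 2(f(m)+f(M))$. Writing $\bar g=\langle Cx,x\rangle$ and using that $F$ is non-increasing in its first argument, we may apply $F(\,\cdot\,,f(\bar g))$ to both sides of this estimate and the inequality flips:
\[
F\bigl(2\langle f(C)x,x\rangle,\,f(\bar g)\bigr)\;\geq\;F\bigl(2(f(m)+f(M)),\,f(\bar g)\bigr).
\]
Since $mI\leq C\leq MI$ forces $\bar g\in[m,M]$, the right-hand side is bounded below by $\min_{t\in[m,M]}F\bigl(2(f(m)+f(M)),f(t)\bigr)$, which is precisely the first inequality in the statement.

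For the equality of the two minima, I would use the same substitution as in Theorem \ref{thm4}: setting $\theta=\frac{M-t}{M-m}$ we have $t=\theta m+(1-\theta)M$, and as $t$ runs over $[m,M]$ the parameter $\theta$ runs over $[0,1]$, so the minimum over $t$ and the minimum over $\theta$ coincide. All function values occurring are legitimate because $f([m,M])\subset J$ and $F$ (hence $-F$) is defined on $J\times J$.

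Alternatively one can take the shortcut already flagged in the remark preceding the statement: if $F$ is non-increasing in $u$, then $-F$ is non-decreasing in $u$, so Theorem \ref{thm4} applies verbatim to $-F$ and yields $-F(\cdots)\leq\max_{t}\bigl(-F(\cdots)\bigr)=-\min_{t}F(\cdots)$; multiplying by $-1$ gives the corollary. Either route is routine, and there is no genuine obstacle here; the only point that needs a moment's care is tracking correctly how the reversed monotonicity of $F$ combines with the fixed direction of the bound supplied by Theorem \ref{thm3}, so that the final inequality points the right way.
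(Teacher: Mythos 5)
Your proposal is correct and matches the paper exactly: the authors give no separate proof but state that the corollary follows ``in the same way (or more simply just by replacing $F$ by $-F$)'' as Theorem \ref{thm4}, which are precisely the two routes you describe. Both your direct mirroring via Theorem \ref{thm3} and your $-F$ shortcut are sound.
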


\begin{corollary} \label{cor4}
Let the conditions of Theorem \ref{thm1} be satisfied. Then,
\begin{itemize} 
\item[(i)] $2\langle f(C)x,x\rangle\leq \lambda f(\langle Cx, x\rangle)$ for some $\lambda>0$,
\item[(ii)] 
$2\langle f(C)x,x\rangle\leq \lambda +f(\langle Cx, x\rangle)$ for some $\lambda\in\Bbb{R}$.
\end{itemize}
\end{corollary}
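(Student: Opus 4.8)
The plan is to obtain both inequalities as immediate specializations of Theorem \ref{thm4} (equivalently, of Theorem \ref{thm3}) by choosing the auxiliary two-variable function $F$ appropriately, the only work being to pick $F$ non--decreasing in its first variable and to check its domain.

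For part (ii) I would take $F(u,v):=u-v$, which is defined on all of $\Bbb{R}\times\Bbb{R}$ (so certainly on $J\times J$) and is non--decreasing in its first variable. Theorem \ref{thm4} then yields
$$
2\langle f(C)x,x\rangle-f(\langle Cx,x\rangle)\leq \max_{t\in[m,M]}\big(2(f(m)+f(M))-f(t)\big)=2(f(m)+f(M))-\min_{t\in[m,M]}f(t),
$$
the minimum being attained because $f$ is continuous on the compact interval $[m,M]$. Setting $\lambda:=2(f(m)+f(M))-\min_{t\in[m,M]}f(t)\in\Bbb{R}$ gives the claim. One can also read this off directly from Theorem \ref{thm3} by writing $2\langle f(C)x,x\rangle=\big(2\langle f(C)x,x\rangle-f(\langle Cx,x\rangle)\big)+f(\langle Cx,x\rangle)$ and bounding the bracket via $\langle f(C)x,x\rangle\leq f(m)+f(M)$ together with $f(\langle Cx,x\rangle)\geq\min_{[m,M]}f$.

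For part (i) I would take $F(u,v):=u/v$. Here the point to verify is that $F$ is non--decreasing in $u$ on $J\times J$, which needs $v>0$ throughout $J$, i.e. that $f$ be strictly positive on $[m,M]$; recall that a non-zero $P$-class function is only guaranteed to be nonnegative, so this is exactly the place where a little care is needed (choosing $J\subset(0,\infty)$ with $f([m,M])\subset J$, or reading the statement only when $f(\langle Cx,x\rangle)>0$). Granting this, $\mu:=\min_{t\in[m,M]}f(t)>0$, and Theorem \ref{thm4} gives
$$
\frac{2\langle f(C)x,x\rangle}{f(\langle Cx,x\rangle)}\leq \max_{t\in[m,M]}\frac{2(f(m)+f(M))}{f(t)}=\frac{2(f(m)+f(M))}{\mu}=:\lambda>0,
$$
whence $2\langle f(C)x,x\rangle\leq\lambda\,f(\langle Cx,x\rangle)$. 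As before, this is equivalent to combining $2\langle f(C)x,x\rangle\leq 2(f(m)+f(M))$ from Theorem \ref{thm3} with $f(\langle Cx,x\rangle)\geq\mu$.

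The only genuine obstacle is the well-posedness issue in (i): unlike (ii), where $F(u,v)=u-v$ is harmless, the choice $F(u,v)=u/v$ forces one to work on a sub-interval $J\subset(0,\infty)$ containing $f([m,M])$, hence to know that $f>0$ on $[m,M]$. Everything else is a routine substitution into Theorem \ref{thm4} together with the fact that a continuous function attains its extrema on $[m,M]$.
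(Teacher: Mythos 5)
Your proposal is correct and follows essentially the same route as the paper: both parts are obtained by substituting $F(u,v)=u/v$ and $F(u,v)=u-v$ into Theorem \ref{thm4} and taking $\lambda$ to be the resulting maximum, exactly as the authors do. Your added caveat for part (i) --- that the choice $J\subset(0,\infty)$ tacitly requires $f>0$ on $[m,M]$, which is not guaranteed for a general nonnegative $P$-class function --- is a legitimate point that the paper's own proof passes over silently.
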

\begin{proof}
(i) Consider $F(u,v)=\frac{u}{v}$, $\varphi(t)=\frac{2(f(m)+f(M))}{f(t)}$, and $J=(0,\infty)$.
So, $F$ is non-decreasing on its first variable and by Theorem \ref{thm4} we have
$$
\frac{2\langle f(C)x,x\rangle}{f(\langle Cx,x\rangle)}\leq \max_{t\in[m,M]}\varphi(t)=\frac{2(f(m)+f(M))}{\min_{t\in[m,M]}f(t)}.
$$
The function $\varphi$ essentially attains its maximum value when the function $f$
attains its minimum value on $[m,M]$ by continuity of $f$.
Hence, by letting  $\lambda=\frac{2(f(m)+f(M))}{\min_{t\in[m,M]}f(t)}$, we find the result.

(ii) Consider $F(u,v)=u-v$, $\varphi(t)=2(f(m)+f(M))-f(t)$, and $J=\Bbb{R}$.
So, $F$ is non-decreasing on its first variable and Theorem \ref{thm4} leads
$$
2\langle f(C)x,x\rangle-f(\langle Cx,x\rangle)\leq \max_{t\in[m,M]}\varphi(t)=2(f(m)+f(M))-\min_{t\in[m,M]}f(t).
$$
The function $f$
attains its minimum value by continuity of $f$.
Hence, it suffices to let $\lambda=2(f(m)+f(M))-\min_{t\in[m,M]}f(t)$.
\end{proof}



Combining Theorem \ref{thm1} and Corollary \ref{cor4} we identify the following result.

\begin{corollary} 
Let the conditions of Theorem \ref{thm1} be satisfied. Then,
\begin{itemize}
\item[(i)] $\frac{2}{\lambda}\langle f(C)x,x\rangle\leq f(\langle Cx, x\rangle)\leq 2\langle f(C)x,x\rangle$ for some $\lambda>0$,
\item[(ii)] $0\leq 2\langle f(C)x,x\rangle-f(\langle Cx, x\rangle) \leq \lambda$ for some $\lambda\in\Bbb{R}$.
\end{itemize}
\end{corollary}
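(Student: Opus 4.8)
The statement is a direct synthesis of the two-sided estimates already established, so the plan is purely to assemble the pieces in the right order. First I would invoke Theorem \ref{thm1}, which under the stated hypotheses yields
\begin{equation*}
f(\langle Cx, x\rangle)\leq 2\langle f(C)x,x\rangle .
\end{equation*}
This single inequality simultaneously supplies the right-hand bound in (i) and the left-hand bound $0\leq 2\langle f(C)x,x\rangle-f(\langle Cx,x\rangle)$ in (ii), the latter because the difference on the right is nonnegative (equivalently: since $f(m)+f(M)-f(C)\geq 0$ one has $\langle f(C)x,x\rangle\geq 0$, and since every continuous $P$-class function on $[m,M]$ is nonnegative one has $f(\langle Cx,x\rangle)\geq 0$, so both sides of the displayed inequality are in fact $\geq 0$).

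For the remaining bound in (i) I would appeal to Corollary \ref{cor4}(i): there exists $\lambda>0$ — explicitly $\lambda=\dfrac{2(f(m)+f(M))}{\min_{t\in[m,M]}f(t)}$ when this minimum is positive — with
\begin{equation*}
2\langle f(C)x,x\rangle\leq \lambda\, f(\langle Cx,x\rangle).
\end{equation*}
Dividing through by $\lambda>0$ gives $\dfrac{2}{\lambda}\langle f(C)x,x\rangle\leq f(\langle Cx,x\rangle)$, which together with the Theorem \ref{thm1} estimate above completes the chain in (i). Similarly, for the upper bound in (ii) I would use Corollary \ref{cor4}(ii): there is $\lambda\in\mathbb{R}$ — one may take $\lambda=2(f(m)+f(M))-\min_{t\in[m,M]}f(t)$ — with $2\langle f(C)x,x\rangle\leq \lambda+f(\langle Cx,x\rangle)$; transposing $f(\langle Cx,x\rangle)$ yields $2\langle f(C)x,x\rangle-f(\langle Cx,x\rangle)\leq\lambda$, and combined with the nonnegativity noted above this gives the two-sided statement in (ii).

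The argument has no genuine obstacle: it is a bookkeeping step that records the upper and lower operator Jensen-type bounds for $P$-class functions side by side. The only point demanding a word of care is the hypothesis implicit in Corollary \ref{cor4}(i) that $\min_{t\in[m,M]}f(t)>0$, so that the constant $\lambda$ there is a well-defined positive number; since $f$ is continuous on the compact interval $[m,M]$ this minimum is attained, and one should either assume $f$ is not identically zero on $[m,M]$ (so the minimum is positive) or state (i) with the understanding that $\lambda$ is chosen as in Corollary \ref{cor4}. With that caveat, the four inequalities assemble exactly into the asserted chains.
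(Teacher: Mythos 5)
Your proof is correct and takes essentially the same approach as the paper: the paper gives no separate argument, stating only that the result follows by ``combining Theorem \ref{thm1} and Corollary \ref{cor4}'', which is precisely the assembly you carry out. Your added caveat that $\min_{t\in[m,M]}f(t)>0$ is needed for the constant in part (i) to be a well-defined positive number is a point the paper leaves implicit.
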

For instance, when $f(t)=t^r$, $0<r<1$ and $t\in[m,M]$, we obtain
$$
0\leq 2\langle C^rx,x\rangle - \langle Cx,x\rangle^r\leq 2M^r+m^r
$$
and when $f(t)=\ln t$, $t\in[m,M]\subseteq [1,\infty)$, $f$ is $P$-class and we have
$$
\frac{\ln m}{\ln M+\ln m}\langle \ln(C)x,x\rangle\leq \ln(\langle Cx, x\rangle)\leq 2\langle \ln(C)x,x\rangle,
$$
$$
0\leq 2\langle \ln(C)x,x\rangle - \ln(\langle Cx, x\rangle)\leq 2\ln(M)+\ln(m).
$$

As a consequence of the definition of a $P$-class function one can verify that if $f$ is a continuous increasing
$P$-class function and $g$ is a convex function, then $f\circ g$ is a $P$-class function.
Remember that $f$ is homogeneous, whenever,
$f(\lambda A) = \lambda f(A)$ for $\lambda > 0$.
We have the following simple corollary.
%
\begin{corollary}
Let the conditions of Theorem \ref{thm1} be satisfied and let
$f$ be a non-decreasing function and $n\geq 1$.
\begin{itemize}
\item[(i)] If $f$ is homogeneous, then $f^n(\langle Cx,x\rangle)\leq 2^n\langle f^n(C)x,x\rangle$.
\item[(ii)] If $f$ is subadditive, then $f^n$ is $P$-class and $f^n(\langle Cx,x\rangle)\leq 2\langle f^n(C)x,x\rangle$.
\end{itemize}
\end{corollary}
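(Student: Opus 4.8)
The plan is to derive both inequalities from Theorem \ref{thm1} together with the H\"older--MacCarthy inequality (Theorem \ref{thm00}), after recording two elementary facts. First, since a nonzero continuous $P$-class function on $[m,M]$ is nonnegative-valued (as observed in the Introduction), we have $f\geq 0$ on $Sp(C)\subseteq[m,M]$, so $f(C)$ is a positive operator; the case $f\equiv 0$ is trivial and may be discarded. Second, because $f$ is non-decreasing, for $x,y\in[m,M]$ and $\lambda\in[0,1]$ we have $\lambda x+(1-\lambda)y\leq\max\{x,y\}$, hence $f(\lambda x+(1-\lambda)y)\leq f(\max\{x,y\})=\max\{f(x),f(y)\}$.

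For (i), I would first apply Theorem \ref{thm1} to $f$ to obtain $f(\langle Cx,x\rangle)\leq 2\langle f(C)x,x\rangle$. Both sides are nonnegative, so raising to the power $n\geq 1$ preserves the inequality and gives $f^n(\langle Cx,x\rangle)\leq 2^n\langle f(C)x,x\rangle^n$. Since $f(C)\geq 0$, the first assertion of Theorem \ref{thm00}, applied to the positive operator $f(C)$ with exponent $r=n$ (the equality being trivial when $n=1$), yields $\langle f(C)x,x\rangle^n\leq\langle (f(C))^n x,x\rangle$; and $(f(C))^n=f^n(C)$ by multiplicativity of the continuous functional calculus. Concatenating these two estimates gives $f^n(\langle Cx,x\rangle)\leq 2^n\langle f^n(C)x,x\rangle$. (The homogeneity hypothesis is not really used in this chain; monotonicity and homogeneity serve only to situate $f$ inside the class of functions covered by Theorem \ref{thm1} and to guarantee $f\geq 0$.)

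For (ii), the extra ingredient is that $f^n$ is itself a continuous $P$-class function, which lets us apply Theorem \ref{thm1} to $f^n$ directly and thereby replace the constant $2^n$ by $2$. Continuity of $f^n$ is immediate from that of $f$. For the $P$-class property I would use the second elementary fact above: for $x,y\in[m,M]$ and $\lambda\in[0,1]$,
\[
f^n(\lambda x+(1-\lambda)y)\leq\big(\max\{f(x),f(y)\}\big)^n=\max\{f^n(x),f^n(y)\}\leq f^n(x)+f^n(y),
\]
the last inequality using $f^n(x),f^n(y)\geq 0$. (Alternatively, subadditivity together with monotonicity gives $f(\lambda x+(1-\lambda)y)\leq f(x+y)\leq f(x)+f(y)$; either route places $f^n$ in the $P$-class.) With $f^n$ now a continuous $P$-class function on $[m,M]$, Theorem \ref{thm1} applied to $f^n$ gives $f^n(\langle Cx,x\rangle)\leq 2\langle f^n(C)x,x\rangle$.

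The step I expect to require the most care is the verification in (ii) that $f^n$ genuinely belongs to the $P$-class — checking the defining inequality for all $\lambda\in[0,1]$ (endpoints included) and all $x,y\in[m,M]$, and making sure the nonnegativity of $f$ (needed both to pass from the maximum to the sum and to apply Theorem \ref{thm00} to $f(C)$) is in force. All of this rests on the Introduction's remark that a nonzero $P$-class function is nonnegative; the remainder is a routine assembly of results already proved in the paper.
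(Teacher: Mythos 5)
The paper states this corollary without proof, so the only way to judge your proposal is against the intent signalled by the hypotheses, and there your reading goes astray: you take $f^n$ to be the pointwise power $(f(t))^n$, whereas the discussion immediately preceding the corollary (the remark that $f\circ g$ is $P$-class for increasing $P$-class $f$ and convex $g$, and the definition of homogeneity supplied just for this purpose), together with the fact that under your reading \emph{neither} homogeneity in (i) \emph{nor} subadditivity in (ii) is ever used --- you notice this yourself for (i) and set it aside --- indicates that $f^n$ is the $n$-fold composite $f\circ f\circ\cdots\circ f$. Your argument for (i), which raises the scalar inequality of Theorem \ref{thm1} to the $n$-th power and invokes Theorem \ref{thm00} for the positive operator $f(C)$, is internally correct but proves a different (true) statement about $(f(\langle Cx,x\rangle))^n$; it says nothing about $f(f(\cdots f(\langle Cx,x\rangle)))$. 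The intended proof of (i) is an induction in which each of the $n$ applications of Theorem \ref{thm1} contributes one factor of $2$, and homogeneity is precisely what lets that factor pass through $f$ at each step:
\[
f^{n}(\langle Cx,x\rangle)=f\big(f^{n-1}(\langle Cx,x\rangle)\big)\leq f\big(2^{n-1}\langle f^{n-1}(C)x,x\rangle\big)=2^{n-1}f\big(\langle f^{n-1}(C)x,x\rangle\big)\leq 2^{n}\langle f^{n}(C)x,x\rangle,
\]
using monotonicity, the inductive hypothesis, homogeneity with $\lambda=2^{n-1}$, and Theorem \ref{thm1} applied to the operator $f^{n-1}(C)$. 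Likewise in (ii) subadditivity is the ingredient that propagates the $P$-class property through the composition: $f^{n}(\lambda x+(1-\lambda)y)=f\big(f^{n-1}(\lambda x+(1-\lambda)y)\big)\leq f\big(f^{n-1}(x)+f^{n-1}(y)\big)\leq f^{n}(x)+f^{n}(y)$ by monotonicity, the inductive hypothesis and subadditivity, after which a single application of Theorem \ref{thm1} to the $P$-class function $f^{n}$ yields the constant $2$ rather than $2^{n}$.

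The individual computations you perform are sound for the power reading (nonnegativity of nonzero $P$-class functions, the H\"{o}lder--MacCarthy step for $f(C)\geq 0$, and the observation that any nonnegative non-decreasing function is automatically $P$-class), but a proof in which the two hypotheses that distinguish parts (i) and (ii) are both idle cannot be a proof of the statement as meant. If you redo the argument for the composite interpretation, you should also make explicit a point the corollary itself glosses over: for the iterates $f^{n}$ and the operators $f^{n}(C)$ to make sense, one needs the range of each iterate to stay inside an interval on which $f$ is a continuous $P$-class function (for instance $f([m,M])\subseteq[m,M]$), and this should be recorded as a standing assumption before the induction can be run.
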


In the next corollary, we obtain the Hermite-Hadamard's type inequality for $P$-class functions.

%
\begin{corollary}\label{thm5}
Let the conditions of Theorem \ref{thm1} be satisfied and let $p$ and $q$
be nonnegative numbers, with $p + q > 0$, for which
\begin{align*}
\langle Cx,x \rangle=\frac{pm+qM}{p+q}.
\end{align*}
Then
\begin{align*}
\frac{1}{2}f\Big(\frac{pm+qM}{p+q}\Big)\leq \langle f(C)x,x \rangle\leq f(m)+f(M).
\end{align*}
\end{corollary}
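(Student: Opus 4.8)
The plan is to read off both inequalities from results already proved in this section, the only genuine task being to locate the prescribed barycenter inside $[m,M]$ so that those results apply.

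First I would record that the standing hypotheses of Theorem \ref{thm1} are in force here: $f$ is a continuous $P$-class function on $[m,M]$ and $C$ is self-adjoint with $Sp(C)\subseteq[m,M]$, $\|x\|=1$. Moreover, since $p,q\geq 0$ with $p+q>0$, the quantity
\[
\frac{pm+qM}{p+q}=\frac{p}{p+q}\,m+\frac{q}{p+q}\,M
\]
is a convex combination of $m$ and $M$, hence it lies in $[m,M]$; so the identity $\langle Cx,x\rangle=\frac{pm+qM}{p+q}$ is consistent with $\langle Cx,x\rangle\in[m,M]$ and poses no obstruction.

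For the left-hand inequality I would apply Theorem \ref{thm1} to the unit vector $x$, obtaining $f(\langle Cx,x\rangle)\leq 2\langle f(C)x,x\rangle$, then divide by $2$ and substitute $\langle Cx,x\rangle=\frac{pm+qM}{p+q}$ to get $\tfrac12 f\!\big(\frac{pm+qM}{p+q}\big)\leq\langle f(C)x,x\rangle$. For the right-hand inequality I would simply invoke Theorem \ref{thm3}, which under precisely these hypotheses gives $\langle f(C)x,x\rangle\leq f(m)+f(M)$; the particular value of $\langle Cx,x\rangle$ is irrelevant for this half. Combining the two displays yields the claimed chain of inequalities.

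There is essentially no hard step: the entire content is the combination of Theorem \ref{thm1} (lower bound) and Theorem \ref{thm3} (upper bound), and the only point deserving a line of justification is the convex-combination remark guaranteeing $\frac{pm+qM}{p+q}\in[m,M]$, which makes both theorems legitimately applicable.
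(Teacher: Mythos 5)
Your argument is correct and coincides with the paper's own proof: the authors likewise obtain the chain $f\big(\frac{pm+qM}{p+q}\big)=f(\langle Cx,x\rangle)\leq 2\langle f(C)x,x\rangle\leq 2(f(m)+f(M))$ by combining Theorem \ref{thm1} and Theorem \ref{thm3}. Your extra remark that $\frac{pm+qM}{p+q}$ is a convex combination lying in $[m,M]$ is a harmless (and reasonable) addition the paper leaves implicit.
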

\begin{proof}
By virtue of Theorem  \ref{thm1} and \ref{thm3} we reach
\begin{align*}
f\Big(\frac{pm+qM}{p+q}\Big)=f(\langle Cx,x \rangle)\leq 2\langle f(C)x,x \rangle)\leq 2(f(m)+f(M)).
\end{align*}
\end{proof}

We can improve the H\"{o}lder-MacCarthy inequality by providing an upper bound.
We use the fact that the function $t^r$, $0<r<1$, is $P$-class, in addition to being concave.
%
\begin{lemma}\label{lm-Mac}
Let $\alpha, \beta>0$ and $0<r<1$. Then, $(\alpha+\beta)^r\leq\alpha^r+\beta^r$.
\end{lemma}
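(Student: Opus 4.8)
The plan is to reduce the two-variable inequality to a one-variable statement by homogeneity and then exploit the elementary fact that $t\mapsto t^{r}$ satisfies $t^{r}\ge t$ on $[0,1]$ when $0<r<1$. First I would set $s:=\frac{\alpha}{\alpha+\beta}\in(0,1)$, so that $\frac{\beta}{\alpha+\beta}=1-s$, and divide the claimed inequality by $(\alpha+\beta)^{r}>0$; it becomes equivalent to $1\le s^{r}+(1-s)^{r}$. Next, since $0<s\le 1$ and $0<r<1$, raising a number in $(0,1]$ to a smaller positive exponent does not decrease it, so $s^{r}\ge s$ and $(1-s)^{r}\ge 1-s$. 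Adding these and using $s+(1-s)=1$ yields $s^{r}+(1-s)^{r}\ge 1$, which is exactly what was needed; multiplying back by $(\alpha+\beta)^{r}$ recovers $(\alpha+\beta)^{r}\le\alpha^{r}+\beta^{r}$.

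An alternative, perhaps more in the spirit of the remark preceding the lemma, is to invoke concavity directly: the function $\varphi(t)=t^{r}$ is concave on $[0,\infty)$ with $\varphi(0)=0$, and any concave function that is nonnegative at the origin is subadditive, because
\[
\varphi(\alpha)=\varphi\!\left(\tfrac{\alpha}{\alpha+\beta}(\alpha+\beta)+\tfrac{\beta}{\alpha+\beta}\,0\right)\ge \tfrac{\alpha}{\alpha+\beta}\,\varphi(\alpha+\beta),
\]
and symmetrically $\varphi(\beta)\ge\frac{\beta}{\alpha+\beta}\varphi(\alpha+\beta)$; summing gives $\varphi(\alpha)+\varphi(\beta)\ge\varphi(\alpha+\beta)$. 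Yet another route is to fix $\beta$ and differentiate $g(\alpha):=\alpha^{r}+\beta^{r}-(\alpha+\beta)^{r}$, noting $g(0)=0$ and $g'(\alpha)=r\bigl(\alpha^{r-1}-(\alpha+\beta)^{r-1}\bigr)\ge 0$ since $x\mapsto x^{r-1}$ is decreasing for $r-1<0$; hence $g$ is nondecreasing on $[0,\infty)$ and $g\ge 0$.

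As for difficulty, there is essentially no obstacle here: the statement is a standard scalar subadditivity inequality, and each of the three arguments above is a couple of lines. The only point requiring a word of care is the boundary behaviour as $\alpha\to 0$ or $\beta\to 0$, but since the hypotheses assume $\alpha,\beta>0$ strictly this never arises, and in fact all three proofs extend continuously to the closed quadrant. I would present the first (normalization) proof as the cleanest and shortest, and it is the one I would actually write out in full.
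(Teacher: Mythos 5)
Your proposal is correct. Your primary argument is a genuinely different (and calculus-free) route from the paper's: you normalize by $(\alpha+\beta)^{r}$, reduce to $s^{r}+(1-s)^{r}\ge 1$ for $s\in(0,1)$, and conclude from the elementary fact that $s^{r}\ge s$ on $(0,1]$ when $0<r<1$. The paper instead normalizes by $\beta$, sets $f_{r}(t)=(1+t)^{r}-t^{r}$, and shows $f_{r}'(t)<0$, so that $f_{r}(\alpha/\beta)\le f_{r}(0)=1$; this is essentially your third variant (fix $\beta$, differentiate $g(\alpha)=\alpha^{r}+\beta^{r}-(\alpha+\beta)^{r}$), just parametrized by the ratio $t=\alpha/\beta$ rather than by $\alpha$. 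What your first proof buys is the avoidance of differentiation altogether, at the cost of nothing; your second (concavity-plus-$\varphi(0)=0$ implies subadditivity) is the most general, as it proves subadditivity for any nonnegative concave function vanishing at the origin, which is precisely the structural fact the surrounding text alludes to when it says $t^{r}$ is $P$-class ``in addition to being concave.'' All three arguments are sound, and the first is a perfectly good replacement for the paper's.
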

\begin{proof}
Define $f_r(t)=(1+t)^r-t^r$, $t>0$ and note that
$f'_{r}(t)<0$. So, $f_{r}$ is decreasing and
the result follows from the fact that $f_r(\frac{\alpha}{\beta})\leq f_r(0)$.
\end{proof}
%
\begin{corollary}\label{Mac1}
Let $C$ be a self-adjoint positive operator on a Hilbert space $\mathcal H$. Then
\begin{itemize}
\item[(i)] for all $0<r<1$ and $x\in \mathcal H$ with $||x||=1$,
\begin{align}\label{eq-corMac1}
\langle C^rx,x\rangle\leq \langle Cx,x\rangle^r\leq 2\langle C^rx,x\rangle,
\end{align}
\item[(ii)] for all $r>1$ and $x\in \mathcal H$ with $||x||=1$,
\begin{align}\label{eq-corMac2}
\langle Cx,x\rangle^r\leq \langle C^rx,x\rangle\leq 2^r\langle Cx,x\rangle^r.
\end{align}
\end{itemize}
\end{corollary}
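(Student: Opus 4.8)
The plan is to handle the two displays in part (i) separately and then bootstrap part (ii) from part (i). The left-hand inequality $\langle C^rx,x\rangle\le\langle Cx,x\rangle^r$ in \eqref{eq-corMac1} requires nothing new: it is precisely part (ii) of Theorem \ref{thm00} (the classical H\"older--MacCarthy inequality), valid since $0<r<1$ and $\|x\|=1$.

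For the new content, namely the upper bound $\langle Cx,x\rangle^r\le 2\langle C^rx,x\rangle$, I would apply Theorem \ref{thm1} to $f(t)=t^r$. The only thing to verify is that $f$ is a continuous $P$-class function on an interval $[m,M]\subseteq[0,\infty)$ containing $Sp(C)$. Since $C\ge 0$ we may take $0\le m<M$ (if $Sp(C)$ is a single point then the asserted inequality is the trivial $c^r\le 2c^r$ and there is nothing to prove), and $t\mapsto t^r$ is nonnegative and nondecreasing on $[0,\infty)$, hence $P$-class because the class of $P$-functions contains every nonnegative monotone function; alternatively one reads this off the subadditivity $(\alpha+\beta)^r\le\alpha^r+\beta^r$ of Lemma \ref{lm-Mac} together with monotonicity. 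Theorem \ref{thm1} then gives $f(\langle Cx,x\rangle)\le 2\langle f(C)x,x\rangle$, which is exactly $\langle Cx,x\rangle^r\le 2\langle C^rx,x\rangle$, completing \eqref{eq-corMac1}.

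To obtain (ii) I would apply part (i) not to $C$ but to the positive self-adjoint operator $C^r$ with exponent $1/r\in(0,1)$. Since $(C^r)^{1/r}=C$ by the functional calculus, part (i) yields
$$\langle Cx,x\rangle=\langle (C^r)^{1/r}x,x\rangle\le\langle C^rx,x\rangle^{1/r}\le 2\,\langle (C^r)^{1/r}x,x\rangle=2\langle Cx,x\rangle.$$
Raising the outer inequalities to the power $r>0$ (all three quantities are nonnegative, so $t\mapsto t^r$ is order preserving) gives $\langle Cx,x\rangle^r\le\langle C^rx,x\rangle\le 2^r\langle Cx,x\rangle^r$, which is \eqref{eq-corMac2}; note that the left-hand inequality here also agrees with Theorem \ref{thm00}(i), so the substitution argument simultaneously recovers the classical lower bound and supplies the new upper bound. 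I do not expect a genuine obstacle: the only points needing a word of care are the degenerate case where $Sp(C)$ is a single point (so the hypothesis $m<M$ of Theorem \ref{thm1} fails, but the inequality is then trivial) and, if one prefers to work with $m>0$ in intermediate steps, the remark that $t^r$ extends continuously to $t=0$ so that Theorem \ref{thm1} remains applicable when $m=0$.
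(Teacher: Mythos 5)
Your proposal is correct and follows essentially the same route as the paper: the lower bound in (i) is the classical H\"older--MacCarthy inequality, the upper bound comes from applying Theorem \ref{thm1} to the $P$-class function $t\mapsto t^r$ (justified via Lemma \ref{lm-Mac}, as in the paper), and (ii) is obtained exactly as the paper does, by substituting $C^r$ into part (i) with exponent $1/r$ and raising to the power $r$. Your added remarks on the degenerate single-point spectrum and the endpoint $m=0$ are harmless extra care, not a different argument.
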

\begin{proof}
(i) The first inequality is H\"{o}lder-MacCarthy inequality for the case where $0<r<1$.
Let $0<a<b$ and $0<\lambda<1$. In view of Lemma \ref{lm-Mac} we get
$$
(\lambda a+(1-\lambda)b)^r\leq (\lambda a)^r+((1-\lambda)b)^r\leq a^r+b^r.
$$
This ensures the function $t^r$ is $P$-class and hence using Theorem \ref{thm1} we reach
the second inequality.

(ii) By applying $\frac{1}{r}<1$ in part (i) we have
\begin{align}\label{eq-corMac3}
\langle C^{1/r}x,x\rangle\leq \langle Cx,x\rangle^{1/r}\leq 2\langle C^{1/r}x,x\rangle.
\end{align}
Replacing $C^r$ with $C$ in \eqref{eq-corMac3} we deduce
\begin{align*}
\langle Cx,x\rangle\leq \langle C^rx,x\rangle^{1/r}\leq 2\langle Cx,x\rangle,
\end{align*}
which implies the result.
\end{proof}

Let $w_i$, $x_i$ be positive numbers with
$\sum_{i=1}^{n}w_i= 1$. Then the weighted power means are
defined by
$$
M_n^{[r]}(x;w)=\Big(\sum_{i=1}^{n}w_ix_i^r\Big)^{1/r}, \ \ r\neq 0
$$
and
$$
M_n^{[0]}(x;w)=\prod_{i=1}^{n}x_i^{w_i}
$$
is called weighted geometric mean and denoted by $G_w$.
It is well-known that if $s\leq r$, then
\begin{equation}\label{wpm-inc}
M_n^{[s]}(x;w)\leq M_n^{[r]}(x;w).
\end{equation}

The weighted arithmetic mean of a non-empty sequence of data $\{x_1, x_2, ..., x_n\}$
and corresponding non-negative weights $\{w_{1},w_{2}, ..., w_{n}\}$
with $\sum_{i=1}^{n}w_i= 1$
is defined by
$$
A_w=\sum_{i=1}^{n}w_ix_i
$$
and the weighted harmonic mean of them is defined by
$$
H_w=\Big(\sum_{i=1}^{n}w_ix_i^{-1}\Big)^{-1}.
$$
The arithmetic-geometric-harmonic mean inequality is a well-known inequality as follows:
$$
H_w\leq G_w \leq A_w.
$$
According to improved H\"{o}lder-MacCarthy inequality we identify the following
relation between the weighted arithmetic mean and the weighted power mean.

\begin{corollary}\label{HAG1}
Let $w_i$, $x_i$ be positive numbers with
$\sum_{i=1}^{n}w_i= 1$. Then
\begin{itemize}
\item[(i)] for all $0<r<1$,
$$
M_n^{[r]}(x;w)\leq A_w\leq 2^{1/r}M_n^{[r]}(x;w),
$$
\item[(ii)] for all $r>1$,
$$
A_w\leq M_n^{[r]}(x;w)\leq2A_w.
$$
\end{itemize}
\end{corollary}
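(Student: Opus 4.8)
The plan is to deduce this discrete statement from the improved H\"{o}lder--MacCarthy inequality (Corollary \ref{Mac1}) by the standard diagonalization trick. First I would work in the Hilbert space $\mathcal{H}=\Bbb{C}^n$ with its usual inner product, take $C=\mathrm{diag}(x_1,\dots,x_n)$, which is self-adjoint and positive since every $x_i>0$, and set $e:=(\sqrt{w_1},\dots,\sqrt{w_n})\in\mathcal{H}$. Because $\sum_{i=1}^{n}w_i=1$ one has $\langle e,e\rangle=1$, and moreover $\langle Ce,e\rangle=\sum_{i=1}^{n}w_ix_i=A_w$ while, using $C^r=\mathrm{diag}(x_1^r,\dots,x_n^r)$, one gets $\langle C^re,e\rangle=\sum_{i=1}^{n}w_ix_i^r=\big(M_n^{[r]}(x;w)\big)^r$.

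For part (i), assuming $0<r<1$, I would apply Corollary \ref{Mac1}(i) to $C$ and $e$ to obtain $\langle C^re,e\rangle\leq\langle Ce,e\rangle^r\leq 2\langle C^re,e\rangle$, that is, $\big(M_n^{[r]}(x;w)\big)^r\leq A_w^r\leq 2\big(M_n^{[r]}(x;w)\big)^r$. Since $1/r>0$, the map $t\mapsto t^{1/r}$ is increasing on $(0,\infty)$, so raising each of the three terms to the power $1/r$ yields $M_n^{[r]}(x;w)\leq A_w\leq 2^{1/r}M_n^{[r]}(x;w)$.

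For part (ii), assuming $r>1$, I would instead apply Corollary \ref{Mac1}(ii) to $C$ and $e$, getting $\langle Ce,e\rangle^r\leq\langle C^re,e\rangle\leq 2^r\langle Ce,e\rangle^r$, i.e. $A_w^r\leq\big(M_n^{[r]}(x;w)\big)^r\leq 2^rA_w^r$; taking $1/r$-th powers and using the monotonicity of $t\mapsto t^{1/r}$ once more gives $A_w\leq M_n^{[r]}(x;w)\leq 2A_w$. Beyond this bookkeeping I do not expect any real obstacle: the only thing to keep track of is that the exponent $1/r$ is positive in both cases, so passing to $r$-th roots preserves the direction of the inequalities, and that the diagonal construction is what turns the weighted arithmetic mean and the weighted power mean into the operator quantities $\langle Ce,e\rangle$ and $\langle C^re,e\rangle$.
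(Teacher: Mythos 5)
Your proposal is correct and follows essentially the same route as the paper: the paper also takes $C=\mathrm{diag}(x_1,\dots,x_n)$ and the unit vector with entries $\sqrt{w_i}$, identifies $\langle Cx,x\rangle=A_w$ and $\langle C^rx,x\rangle=\big(M_n^{[r]}(x;w)\big)^r$, and applies Corollary \ref{Mac1}(i) and (ii). Your explicit remark about taking $1/r$-th powers being order-preserving is the only step the paper leaves implicit.
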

\begin{proof}
(i) Consider
$$
C=\left(
\begin{array}{lll}
x_1 & \cdots & 0\\
\vdots & \ddots & \vdots\\
0 & \cdots & x_n
\end{array}
\right) \ \ \ \ \ \  \textrm{and} \ \ \ \ \ \
x=\left(
\begin{array}{l}
\sqrt{w_1}\\
\vdots\\
\sqrt{w_n}
\end{array}
\right).
$$
Clearly, we have $\langle Cx,x\rangle^r=(\sum_{i=1}^{n}w_ix_i)^r$
and
$\langle C^rx,x\rangle=\sum_{i=1}^{n}w_ix_i^r$.
In view of \eqref{eq-corMac1}, we obtain the desired result.

(ii) By considering $C$ and $x$ as above and applying \eqref{eq-corMac2} we get the result.
\end{proof}


Some refinements of the arithmetic-geometric-harmonic mean inequality are of interest.

\begin{remark}
Let $w_i$, $x_i$ be positive numbers with
$\sum_{i=1}^{n}w_i= 1$.

(i) For all $0<r<1$,
\begin{align*}
2^{-1/r}M_n^{[-r]}(x;w)&\leq H_w \leq M_n^{[-r]}(x;w)\\
&\leq G_w\leq M_n^{[r]}(x;w)\leq A_w\leq 2^{1/r}M_n^{[r]}(x;w).
\end{align*}
Replacing $x_i^{-1}$ with $x_i$ in Corollary \ref{HAG1}(i)
and applying the monotonically decreasing function $t^{-1}$ to both sides of the inequalities
we get the first and second inequalities. 
The third and forth inequalities obtain by \eqref{wpm-inc}.
We deduce the last two inequalities by Corollary \ref{HAG1}(i).

(ii) For all $r>1$,
$$
\frac{1}{2} H_w\leq M_n^{[-r]}(x;w)\leq H_w\leq G_w\leq A_w\leq M_n^{[r]}(x;w)\leq2A_w.
$$

Similar to that of part (i) and  Corollary \ref{HAG1}(ii)
we reach the first and second inequalities. 
The third and forth inequalities are well-known inequalities.
The last two inequalities are obtained in Corollary \ref{HAG1}(ii).
\end{remark}

%
%

\section{Multiple operator versions and its application}

%
%

In this section,
we investigate a multiple operator version of Theorem \ref{thm1} and
the corresponding applications for the $P$-class functions.

%
\begin{theorem}\label{M-Mond-pclass}
Let $C_i$ be self-adjoint operators with $Sp(C_i)\subseteq [m,M]$ for some scalars
$m<M$ and $x_i\in \mathcal H$, $i\in\{1,...,n\}$ with $\sum_{i=1}^{n}||x_i||^2=1$.
If $f$ is a $P$-class function on $[m,M]$, then
$$
f\Big(\sum_{i=1}^{n}\langle C_ix_i,x_i\rangle\Big)\leq 2\sum_{i=1}^{n}\langle f(C_i)x_i,x_i\rangle.
$$
\end{theorem}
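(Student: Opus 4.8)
The plan is to reduce the multiple-operator statement to the single-operator Theorem \ref{thm1} by a standard block-diagonal construction. First I would form the orthogonal direct sum $\mathcal{K} = \mathcal{H} \oplus \cdots \oplus \mathcal{H}$ ($n$ copies) and define the self-adjoint operator $C := C_1 \oplus \cdots \oplus C_n$ on $\mathcal{K}$, together with the vector $x := x_1 \oplus \cdots \oplus x_n \in \mathcal{K}$. Since each $Sp(C_i) \subseteq [m,M]$, we have $Sp(C) \subseteq [m,M]$, so $mI \leq C \leq MI$ on $\mathcal{K}$. The hypothesis $\sum_{i=1}^n \|x_i\|^2 = 1$ is exactly the statement $\|x\|_{\mathcal{K}} = 1$.

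The next step is to verify that the construction is compatible with functional calculus: for the block-diagonal $C$ one has $f(C) = f(C_1) \oplus \cdots \oplus f(C_n)$, which follows because the spectral projections of $C$ decompose along the summands, or more elementarily because polynomials in $C$ act blockwise and $f$ is a uniform limit of polynomials on $[m,M]$ (here continuity of $f$ is used; $P$-class functions in the paper are taken continuous throughout, cf. Theorem \ref{thm1}). Consequently,
\begin{align*}
\langle Cx, x\rangle_{\mathcal{K}} &= \sum_{i=1}^{n}\langle C_i x_i, x_i\rangle, \\
\langle f(C)x, x\rangle_{\mathcal{K}} &= \sum_{i=1}^{n}\langle f(C_i)x_i, x_i\rangle.
\end{align*}

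Applying Theorem \ref{thm1} to $C$ and $x$ on the Hilbert space $\mathcal{K}$ then gives
$$
f\Big(\sum_{i=1}^{n}\langle C_i x_i, x_i\rangle\Big) = f(\langle Cx,x\rangle_{\mathcal{K}}) \leq 2\langle f(C)x,x\rangle_{\mathcal{K}} = 2\sum_{i=1}^{n}\langle f(C_i)x_i, x_i\rangle,
$$
which is the claim.

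The only genuinely delicate point is the behaviour of $f(C)$ under the direct-sum decomposition, i.e.\ the identity $f(C) = \bigoplus_i f(C_i)$; everything else is bookkeeping. I would handle this by noting that for a continuous $f$ on $[m,M]$ we may take polynomials $p_k \to f$ uniformly on $[m,M]$, observe $p_k(C) = \bigoplus_i p_k(C_i)$ trivially, and pass to the limit in operator norm, using that $\|p_k(C_i) - f(C_i)\| \leq \sup_{[m,M]}|p_k - f| \to 0$ uniformly in $i$ since all spectra lie in the common interval $[m,M]$. This makes the limit commute with the finite direct sum. No other subtlety arises, and the argument works verbatim if one prefers to state it via the spectral theorem instead.
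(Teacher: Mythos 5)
Your proposal is correct and follows essentially the same route as the paper: the paper also forms the block-diagonal operator $\tilde{C}=C_1\oplus\cdots\oplus C_n$ and the stacked vector $\tilde{x}$, checks $Sp(\tilde{C})\subseteq[m,M]$ and $\|\tilde{x}\|=1$, and applies Theorem \ref{thm1}. Your extra care in justifying $f(\tilde{C})=\bigoplus_i f(C_i)$ via uniform polynomial approximation is a detail the paper takes for granted, but it is the same argument.
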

\begin{proof}
We consider
$$
\tilde{C}=\left(
\begin{array}{lll}
C_1 & \cdots & 0\\
\vdots & \ddots & \vdots\\
0 & \cdots & C_n
\end{array}
\right) \ \ \ \ \ \  \textrm{and} \ \ \ \ \ \
\tilde{x}=\left(
\begin{array}{l}
x_1\\
\vdots\\
x_n
\end{array}
\right).
$$
By a simple verification we get $Sp(\tilde{C})\subseteq [m,M]$ and $||\tilde{x}||=1$.
On the other hands,
$$
f(\langle \tilde{C}\tilde{x},\tilde{x} \rangle)=f\Big(\sum_{i=1}^{n}\langle C_ix_i,x_i\rangle\Big),
$$
$$
\langle f(\tilde{C})\tilde{x}, \tilde{x}\rangle=\sum_{i=1}^{n}\langle f(C_i)x_i,x_i\rangle.
$$
According to Theorem \ref{thm1} we have
$f(\langle \tilde{C}\tilde{x},\tilde{x} \rangle) \leq 2\langle f(\tilde{C})\tilde{x}, \tilde{x}\rangle$
and so we deduce the desired result.
\end{proof}

The following particular case is of interest.
%
\begin{corollary}\label{M-Mond-pclass-cor}
Let $C_i$ be self-adjoint operators with $Sp(C_i)\subseteq [m,M]$, $i\in\{1,...,n\}$ for some scalars
$m<M$. 
If $f$ is a $P$-class function on $[m,M]$ and $p_i\geq 0$ with $\sum_{i=1}^{n}p_i=1$, then
$$
f\Big(\sum_{i=1}^{n}p_i\langle C_ix,x\rangle\Big)\leq 2\sum_{i=1}^{n}p_i\langle f(C_i)x,x\rangle
$$
for every $x\in \mathcal H$ with $||x||=1$.
\end{corollary}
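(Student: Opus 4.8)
The plan is to deduce Corollary~\ref{M-Mond-pclass-cor} from Theorem~\ref{M-Mond-pclass} by a judicious choice of the vectors $x_i$. The key observation is that in Corollary~\ref{M-Mond-pclass-cor} we are given a single unit vector $x$ and weights $p_i\geq 0$ with $\sum_{i=1}^n p_i=1$, whereas Theorem~\ref{M-Mond-pclass} admits $n$ vectors $x_1,\dots,x_n$ subject only to $\sum_{i=1}^n\|x_i\|^2=1$. So the natural substitution is $x_i:=\sqrt{p_i}\,x$ for $i\in\{1,\dots,n\}$.

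First I would check the hypothesis of Theorem~\ref{M-Mond-pclass} is met: since $\|x_i\|^2=p_i\|x\|^2=p_i$, we get $\sum_{i=1}^n\|x_i\|^2=\sum_{i=1}^n p_i=1$, as required. The operators $C_i$ already satisfy $Sp(C_i)\subseteq[m,M]$ by assumption, and $f$ is $P$-class on $[m,M]$, so all the hypotheses of Theorem~\ref{M-Mond-pclass} hold for this choice. Next I would compute the two inner-product expressions: $\langle C_i x_i,x_i\rangle=\langle C_i(\sqrt{p_i}\,x),\sqrt{p_i}\,x\rangle=p_i\langle C_i x,x\rangle$, and likewise $\langle f(C_i)x_i,x_i\rangle=p_i\langle f(C_i)x,x\rangle$. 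Substituting these into the conclusion of Theorem~\ref{M-Mond-pclass} gives precisely
$$
f\Big(\sum_{i=1}^{n}p_i\langle C_ix,x\rangle\Big)\leq 2\sum_{i=1}^{n}p_i\langle f(C_i)x,x\rangle,
$$
which is the desired inequality.

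There is essentially no obstacle here; the corollary is a direct specialization. The only point worth a moment's care is the bilinearity/homogeneity step — that scaling a vector by $\sqrt{p_i}$ pulls out a factor $p_i$ from the inner product — and the bookkeeping that the same scaling works simultaneously for $C_i$ and for $f(C_i)$, which is immediate since both are bounded self-adjoint operators. One should also note the degenerate case $p_i=0$ causes no difficulty: then $x_i=0$ and both $\langle C_ix_i,x_i\rangle$ and $\langle f(C_i)x_i,x_i\rangle$ vanish, consistent with the $p_i$-weighted sums. Thus the proof consists of stating the substitution, verifying $\|\tilde x\|=1$, and reading off the result.
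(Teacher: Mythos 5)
Your proof is correct and is exactly the paper's argument: the authors also apply Theorem~\ref{M-Mond-pclass} with the substitution $x_i=\sqrt{p_i}\,x$. Your version just spells out the bookkeeping (verifying $\sum_i\|x_i\|^2=1$ and the factor-of-$p_i$ computations) that the paper leaves implicit.
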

\begin{proof}
It follows from Theorem \ref{M-Mond-pclass} by choosing $x_i=\sqrt{p_i}x$, $i\in\{1,...,n\}$, where $x\in\mathcal H$ with $||x||=1$.
\end{proof}

The following corollary is also of interest.

%
\begin{corollary}\label{M-Mond-sharp-pclass}
Let $f$ be a $P$-class function on $[m,M]$, $C_i$ self-adjoint operators with $Sp(C_i)\subseteq [m,M]$,
$i\in\{1,...,n\}$ and $p_i\geq 0$ with $\sum_{i=1}^{n}p_i=1$.
Assume that $I\subsetneq\{1,...,n\}$ and $I^{c}=\{1,...,n\}\backslash I$, $p_{I}=\sum_{i\in I}p_i$, $p_{I^{c}}=1-\sum_{i\in I}p_i$.
Then for any $x\in \mathcal H$ with $||x||=1$,
\begin{align*}
f\Big(\sum_{i=1}^{n}p_i\langle C_ix,x\rangle\Big)
&\leq \Omega_1(f, I)
\leq \Omega_2(f, I)\\
&\leq 2\sum_{i=1}^{n}\langle f(C_i)x,x\rangle,
\end{align*}
where
\begin{align*}
\Omega_1(f, I)&=f\Big(\sum_{i\in I}\frac{p_i}{p_I}\langle C_ix,x\rangle\Big)+f\Big(\frac{p_i}{p_{I^{c}}} \sum_{i\in I^{c}} p_i\langle C_ix,x\rangle\Big)\\
\Omega_2(f, I)&=2\sum_{i\in I} \frac{p_i}{p_I} \langle f(C_i)x,x\rangle+2\sum_{i\in I^{c}}\frac{p_i}{p_{I^{c}}}  \langle f(C_i)x,x\rangle.
\end{align*}
\end{corollary}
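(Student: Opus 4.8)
The plan is to pull the convex–combination structure out of the weighted sum and then reduce each piece to Theorem~\ref{thm1} and Corollary~\ref{M-Mond-pclass-cor}. Put $a_i=\langle C_ix,x\rangle$; since $Sp(C_i)\subseteq[m,M]$ and $\|x\|=1$ we have $a_i\in[m,M]$, and I assume (as the very formulas for $\Omega_1,\Omega_2$ tacitly require) that $p_I>0$ and $p_{I^{c}}>0$. Set
\[
S=\sum_{i\in I}\frac{p_i}{p_I}\,a_i,\qquad T=\sum_{i\in I^{c}}\frac{p_i}{p_{I^{c}}}\,a_i ,
\]
each of which is a convex combination of points of $[m,M]$ and hence again lies in $[m,M]$; reading the second summands of $\Omega_1$ and $\Omega_2$ with the evident correction, $\Omega_1(f,I)=f(S)+f(T)$. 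Now $p_I+p_{I^{c}}=1$ and $\sum_{i=1}^{n}p_ia_i=p_IS+p_{I^{c}}T$, so the defining inequality \eqref{pi-eq} of a $P$-class function, applied with $\lambda=p_I$, gives the first bound
\[
f\Big(\sum_{i=1}^{n}p_i\langle C_ix,x\rangle\Big)=f\big(p_IS+p_{I^{c}}T\big)\le f(S)+f(T)=\Omega_1(f,I).
\]

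For the middle inequality I would apply Corollary~\ref{M-Mond-pclass-cor} twice: once to the family $\{C_i\}_{i\in I}$ equipped with the probability vector $\{p_i/p_I\}_{i\in I}$, and once to $\{C_i\}_{i\in I^{c}}$ with $\{p_i/p_{I^{c}}\}_{i\in I^{c}}$ (after relabelling each subfamily as $\{1,\dots,|I|\}$ and $\{1,\dots,|I^{c}|\}$; the spectral and unit–vector hypotheses pass to the subfamilies unchanged). This yields
\[
f(S)\le 2\sum_{i\in I}\frac{p_i}{p_I}\langle f(C_i)x,x\rangle,\qquad f(T)\le 2\sum_{i\in I^{c}}\frac{p_i}{p_{I^{c}}}\langle f(C_i)x,x\rangle ,
\]
and adding the two gives exactly $\Omega_1(f,I)\le\Omega_2(f,I)$.

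For the last inequality I use that a non-zero $P$-class function is nonnegative valued on $[m,M]$ (Introduction), so $f(C_i)\ge0$ and hence $\langle f(C_i)x,x\rangle\ge0$, together with the elementary estimates $p_i\le p_I$ for $i\in I$ and $p_i\le p_{I^{c}}$ for $i\in I^{c}$, that is $p_i/p_I\le1$ and $p_i/p_{I^{c}}\le1$. Consequently
\[
\sum_{i\in I}\frac{p_i}{p_I}\langle f(C_i)x,x\rangle\le\sum_{i\in I}\langle f(C_i)x,x\rangle,\qquad \sum_{i\in I^{c}}\frac{p_i}{p_{I^{c}}}\langle f(C_i)x,x\rangle\le\sum_{i\in I^{c}}\langle f(C_i)x,x\rangle ,
\]
and summing these and multiplying by $2$ gives $\Omega_2(f,I)\le 2\sum_{i=1}^{n}\langle f(C_i)x,x\rangle$, which closes the chain.

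I do not expect a genuine obstacle here, since all of the analytic content is already carried by Theorem~\ref{thm1} and Corollary~\ref{M-Mond-pclass-cor}; the only things demanding care are the bookkeeping with the renormalized weights (recognizing $\sum p_ia_i$ as the convex combination $p_IS+p_{I^{c}}T$, and relabelling the two subfamilies so that Corollary~\ref{M-Mond-pclass-cor} applies verbatim) and flagging the typographical slip in the displayed definitions of $\Omega_1$ and $\Omega_2$, whose second summands should read $\sum_{i\in I^{c}}\frac{p_i}{p_{I^{c}}}\langle C_ix,x\rangle$ and $\sum_{i\in I^{c}}\frac{p_i}{p_{I^{c}}}\langle f(C_i)x,x\rangle$ respectively.
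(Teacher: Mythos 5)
Your proposal is correct and follows essentially the same route as the paper: split the weighted sum as the convex combination $p_IS+p_{I^c}T$ and apply the $P$-class inequality, then invoke Corollary~\ref{M-Mond-pclass-cor} on each subfamily, and finish with $p_i/p_I\le 1$, $p_i/p_{I^c}\le 1$ together with the nonnegativity of $f$. Your reading of the second summand of $\Omega_1$ as $f\big(\sum_{i\in I^c}\frac{p_i}{p_{I^c}}\langle C_ix,x\rangle\big)$ matches what the paper's own proof actually uses, and your explicit flagging of the implicit hypotheses $p_I>0$, $p_{I^c}>0$ is a point the paper leaves unstated.
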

\begin{proof}
By rearranging the terms in
$f\Big(\sum_{i=1}^{n}p_i\langle C_ix,x\rangle\Big)$ we reach
\begin{align*}
f\Big(\sum_{i=1}^{n}p_i\langle C_ix,x\rangle\Big)
&=f\Big(p_I(\frac{1}{p_I}\sum_{i\in p_I}p_i\langle C_ix,x\rangle)+p_{I^c}(\frac{1}{p_{I^c}}\sum_{i\in I^c}p_i\langle C_ix,x\rangle)\Big)\\
&\leq f\Big(\frac{1}{p_I} \sum_{i\in I} p_i\langle C_ix,x\rangle\Big)+f\Big(\frac{1}{p_{I^{c}}} \sum_{i\in I^{c}} p_i\langle C_ix,x\rangle\Big)\\
&=\Omega_1(f, I).
\end{align*}
On the other hand, Corollary \ref{M-Mond-pclass-cor} infers
\begin{align*}
\Omega_1(f, I)&=f\Big(\sum_{i\in I} \frac{p_i}{p_I}\langle C_ix,x\rangle\Big)+f\Big(\sum_{i\in I^{c}} \frac{p_i}{p_{I^{c}}}\langle C_ix,x\rangle\Big)\\
&\leq 2\sum_{i\in I}\frac{p_i}{p_I} \langle f(C_i)x,x\rangle+2\sum_{i\in I^{c}}\frac{p_i}{p_{I^{c}}} \langle f(C_i)x,x\rangle)\\
&=\Omega_2(f, I)\\
&\leq 2\sum_{i\in I}\langle f(C_i)x,x\rangle+2\sum_{i\in I^{c}}\langle f(C_i)x,x\rangle\\
&= 2\sum_{i=1}^{n}\langle f(C_i)x,x\rangle.
\end{align*}
\end{proof}

%
\begin{corollary}
Let $f$ be a non-decreasing $P$-class function on $[m,M]$ and let $C_i$, $p_i\geq 0$, $I$, $I^{c}$, $p_{I}$, and $p_{I^{c}}$
be as in Corollary \ref{M-Mond-sharp-pclass}. Then
\begin{align*}
f\Big(\Big|\Big|\sum_{i=1}^{n}p_iC_i\Big|\Big|\Big)
&\leq f\Big(\Big|\Big|\sum_{i\in I}\frac{p_i}{p_I}C_i\Big|\Big|\Big)+f\Big(\Big|\Big|\sum_{i\in I^{c}}\frac{p_i}{p_{I^{c}}} C_i\Big|\Big|\Big)\\
&\leq 2\Big|\Big|\sum_{i\in I}\frac{p_i}{p_I}f(C_i)\Big|\Big|+2\Big|\Big|\sum_{i\in I^{c}}\frac{p_i}{p_{I^{c}}}f(C_i)\Big|\Big|\\
&\leq 2\Big|\Big|\sum_{i=1}^{n}f(C_i)\Big|\Big|.
\end{align*}
\end{corollary}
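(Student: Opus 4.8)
The plan is to lift the scalar and quadratic-form inequalities already available up to the operator norm by taking suprema over unit vectors, using that $\|A\|=\sup_{\|x\|=1}\langle Ax,x\rangle$ for every positive $A$. Put $S_0=\sum_{i=1}^{n}p_iC_i$, $S_I=\sum_{i\in I}\tfrac{p_i}{p_I}C_i$ and $S_{I^c}=\sum_{i\in I^c}\tfrac{p_i}{p_{I^c}}C_i$. In each of these sums the coefficients are nonnegative and sum to $1$, so $mI\le S_0,S_I,S_{I^c}\le MI$; assuming $m\ge 0$ (needed so that $\|S_0\|,\|S_I\|,\|S_{I^c}\|$ lie in $[m,M]$), these operators are positive, so $\|S_0\|=\sup_{\|x\|=1}\langle S_0x,x\rangle$ and likewise for $S_I$ and $S_{I^c}$. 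Since $f\ge 0$, the operators $\sum_{i\in I}\tfrac{p_i}{p_I}f(C_i)$, $\sum_{i\in I^c}\tfrac{p_i}{p_{I^c}}f(C_i)$ and $\sum_{i=1}^{n}f(C_i)$ are positive too, so each of their norms is the supremum over unit vectors of the associated quadratic form.

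\emph{First inequality.} I would regroup $S_0=p_IS_I+p_{I^c}S_{I^c}$, so that $\|S_0\|\le p_I\|S_I\|+p_{I^c}\|S_{I^c}\|$ by the triangle inequality, the right-hand side being a convex combination of two points of $[m,M]$ and hence itself lying in $[m,M]$. Monotonicity of $f$ then yields $f(\|S_0\|)\le f\big(p_I\|S_I\|+p_{I^c}\|S_{I^c}\|\big)$, and the $P$-class property bounds the latter by $f(\|S_I\|)+f(\|S_{I^c}\|)$. This step needs neither continuity of $f$ nor the earlier corollaries.

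\emph{Second inequality.} I would apply Corollary \ref{M-Mond-pclass-cor} to the sub-family $\{C_i:i\in I\}$ with the weights $\{p_i/p_I\}_{i\in I}$, which are nonnegative and sum to $1$: for every unit vector $x$,
$$f\Big(\sum_{i\in I}\tfrac{p_i}{p_I}\langle C_ix,x\rangle\Big)\le 2\sum_{i\in I}\tfrac{p_i}{p_I}\langle f(C_i)x,x\rangle=2\Big\langle\Big(\sum_{i\in I}\tfrac{p_i}{p_I}f(C_i)\Big)x,x\Big\rangle\le 2\Big\|\sum_{i\in I}\tfrac{p_i}{p_I}f(C_i)\Big\|.$$
The left side equals $f(\langle S_Ix,x\rangle)$, and since $f$ is continuous and non-decreasing and $\|S_I\|=\sup_{\|x\|=1}\langle S_Ix,x\rangle$, passing to the supremum over unit $x$ gives $f(\|S_I\|)\le 2\big\|\sum_{i\in I}\tfrac{p_i}{p_I}f(C_i)\big\|$. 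The same argument with $I^c$ in place of $I$ gives the companion bound for $f(\|S_{I^c}\|)$, and adding the two produces the second inequality.

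\emph{Third inequality.} This is the step I expect to be the main obstacle. Since $p_i\le p_I$ for $i\in I$ and each $f(C_i)\ge 0$, one has $\sum_{i\in I}\tfrac{p_i}{p_I}f(C_i)\le\sum_{i\in I}f(C_i)$ and similarly over $I^c$, hence $\sum_{i\in I}\tfrac{p_i}{p_I}f(C_i)+\sum_{i\in I^c}\tfrac{p_i}{p_{I^c}}f(C_i)\le\sum_{i=1}^{n}f(C_i)$, and so $\big\|\sum_{i\in I}\tfrac{p_i}{p_I}f(C_i)+\sum_{i\in I^c}\tfrac{p_i}{p_{I^c}}f(C_i)\big\|\le\big\|\sum_{i=1}^{n}f(C_i)\big\|$ by monotonicity of the norm on the positive cone. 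The delicate point is that passing from this \emph{single} norm of the sum to the displayed \emph{sum of two} norms $2\|\cdot\|+2\|\cdot\|$ would use the triangle inequality in the wrong direction; I would therefore keep the $I$- and $I^{c}$-contributions together and, if necessary, re-derive the final inequality at the level of quadratic forms — where the contributions from the disjoint index sets $I$ and $I^{c}$ recombine additively, exactly as in the proof of Corollary \ref{M-Mond-sharp-pclass} — rather than estimating the two operator norms one at a time.
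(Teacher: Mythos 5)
Your handling of the first two inequalities is essentially the paper's own argument: the paper likewise splits $\big\|\sum_{i=1}^{n}p_iC_i\big\|\le\big\|\sum_{i\in I}p_iC_i\big\|+\big\|\sum_{i\in I^{c}}p_iC_i\big\|$, applies monotonicity of $f$ and then the $P$-class inequality to the convex combination $p_I\cdot\frac{1}{p_I}\big\|\sum_{i\in I}p_iC_i\big\|+p_{I^{c}}\cdot\frac{1}{p_{I^{c}}}\big\|\sum_{i\in I^{c}}p_iC_i\big\|$, and then commutes $f$ with $\sup_{\|x\|=1}\langle\,\cdot\,x,x\rangle$ exactly as you do before invoking Corollary \ref{M-Mond-pclass-cor}. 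Your observation that one needs $m\ge 0$ in order to have $\|A\|=\sup_{\|x\|=1}\langle Ax,x\rangle$ is a genuine hypothesis that the paper uses silently.

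Your hesitation over the third inequality is fully justified, and no rearrangement of your argument could have closed it: the paper's proof ends with the assertion $2\big\|\sum_{i\in I}f(C_i)\big\|+2\big\|\sum_{i\in I^{c}}f(C_i)\big\|=2\big\|\sum_{i=1}^{n}f(C_i)\big\|$, which is exactly the illegitimate use of the triangle inequality you predicted (for positive summands one only has $\|A+B\|\le\|A\|+\|B\|$, the wrong direction, and equality fails in general). Indeed the corollary's last inequality is false as stated: take $n=2$, $I=\{1\}$, $p_1=p_2=\tfrac12$, $f(t)=t$ on $[0,1]$, $C_1=\mathrm{diag}(1,0)$, $C_2=\mathrm{diag}(0,1)$; then the middle line of the display equals $2\|C_1\|+2\|C_2\|=4$ while $2\|C_1+C_2\|=2$. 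Your proposed repair --- bounding the single norm $\big\|\sum_{i\in I}\frac{p_i}{p_I}f(C_i)+\sum_{i\in I^{c}}\frac{p_i}{p_{I^{c}}}f(C_i)\big\|$ by $\big\|\sum_{i=1}^{n}f(C_i)\big\|$ via positivity --- is correct, but it bounds a smaller quantity than the sum of two norms actually produced in the second line: the maximizing unit vectors for the $I$- and $I^{c}$-blocks need not coincide, so the two suprema cannot be recombined into one quadratic form. The honest conclusion is that the chain must stop after the second line, or the final bound must be replaced by $2\big\|\sum_{i\in I}f(C_i)\big\|+2\big\|\sum_{i\in I^{c}}f(C_i)\big\|$; the inequality as printed cannot be proved because it is not true.
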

\begin{proof}
We have
\begin{align*}
f\Big(\Big|\Big|\sum_{i=1}^{n}p_iC_i\Big|\Big|\Big)
&=f\Big(\Big|\Big|\sum_{i\in I}p_iC_i+\sum_{i\in I^{c}}p_iC_i\Big|\Big|\Big)\\
&\leq f\Big(\Big|\Big|\sum_{i\in I}p_iC_i\Big|\Big|+\Big|\Big|\sum_{i\in I^{c}}p_iC_i\Big|\Big|\Big)\\
&= f\Big(p_I\Big|\Big|\frac{1}{p_I}\sum_{i\in I}p_iC_i\Big|\Big|+p_{I^{c}}\Big|\Big|\frac{1}{p_{I^{c}}}\sum_{i\in I^{c}}p_iC_i\Big|\Big|\Big)\\
&\leq f\Big(\frac{1}{p_I}\Big|\Big|\sum_{i\in I}p_iC_i\Big|\Big|\Big)+f\Big(\frac{1}{p_{I^{c}}}\Big|\Big|\sum_{i\in I^{c}}p_iC_i\Big|\Big|\Big).
\end{align*}
On the other hand and by virtue of Corollary \ref{M-Mond-pclass-cor} we get
\begin{align*}
f&\Big(\frac{1}{p_I}\Big|\Big|\sum_{i\in I}p_iC_i\Big|\Big|\Big)+f\Big(\frac{1}{p_{I^{c}}}\Big|\Big|\sum_{i\in I^{c}}p_iC_i\Big|\Big|\Big)\\
&=f\Big(\frac{1}{p_I}\sup_{||x||=1}\langle\sum_{i\in I}p_iC_ix,x\rangle\Big)+f\Big(\frac{1}{p_{I^{c}}}\sup_{||x||=1}\langle\sum_{i\in I^{c}}p_iC_ix,x\rangle\Big)\\
&=\sup_{||x||=1}f\Big(\frac{1}{p_I}\langle\sum_{i\in I}p_iC_ix,x\rangle\Big)+\sup_{||x||=1}f\Big(\frac{1}{p_{I^{c}}}\langle\sum_{i\in I^{c}}p_iC_ix,x\rangle\Big)\\
&\leq 2\sup_{||x||=1}\sum_{i\in I}\frac{p_i}{p_I}\langle f(C_i)x,x\rangle+2\sup_{||x||=1}\sum_{i\in I^{c}}\frac{p_i}{p_{I^{c}}}\langle f(C_i)x,x\rangle\\
&= 2\Big|\Big|\sum_{i\in I}\frac{p_i}{p_I}f(C_i)\Big|\Big|+2\Big|\Big|\sum_{i\in I^{c}}\frac{p_i}{p_{I^{c}}}f(C_i)\Big|\Big|\\
&\leq 2\Big|\Big|\sum_{i\in I}f(C_i)\Big|\Big|+2\Big|\Big|\sum_{i\in I^{c}}f(C_i)\Big|\Big|\\
&= 2\Big|\Big|\sum_{i=1}^{n}f(C_i)\Big|\Big|.
\end{align*}
\end{proof}

%
\begin{remark}
Let $C_i$, $p_i\geq 0$, $I$, $I^{c}$, $p_{I}$, and $p_{I^{c}}$
be as in Corollary \ref{M-Mond-sharp-pclass}. Then

(i) For $0<r<1$, 
\begin{align*}
\Big|\Big|\sum_{i=1}^{n}p_iC_i\Big|\Big|^r
&\leq \Big|\Big|\sum_{i\in I}\frac{p_i}{p_I}C_i\Big|\Big|^r+\Big|\Big|\sum_{i\in I^{c}}\frac{p_i}{p_{I^{c}}} C_i\Big|\Big|^r\\
&\leq 2\Big|\Big|\sum_{i\in I}\frac{p_i}{p_I}C_i^r\Big|\Big|+2\Big|\Big|\sum_{i\in I^{c}}\frac{p_i}{p_{I^{c}}}C_i^r\Big|\Big|\\
&\leq 2\Big|\Big|\sum_{i=1}^{n}C_i^r\Big|\Big|.
\end{align*}

(ii) For $r>1$, and applying part (i) for $\frac{1}{r}<1$ and replacing $C_i^r$ with $C_i$ we conclude
\begin{align*}
\Big|\Big|\sum_{i=1}^{n}p_iC_i^r\Big|\Big|
&\leq \Big(\Big|\Big|\sum_{i\in I}\frac{p_i}{p_I}C_i^r\Big|\Big|^{\frac{1}{r}}+\Big|\Big|\sum_{i\in I^{c}}\frac{p_i}{p_{I^{c}}} C_i^r\Big|\Big|^{\frac{1}{r}}\Big)^r\\
&\leq 2^r\Big(\Big|\Big|\sum_{i\in I}\frac{p_i}{p_I}C_i\Big|\Big|+\Big|\Big|\sum_{i\in I^{c}}\frac{p_i}{p_{I^{c}}}C_i\Big|\Big|\Big)^r\\
&\leq \Big(2\Big|\Big|\sum_{i=1}^{n}C_i\Big|\Big|\Big)^r.
\end{align*}
\end{remark}

%
\begin{theorem}\label{Multi-thm3}
Let the conditions of Theorem \ref{M-Mond-pclass} be satisfied. Then
\begin{equation}
 \sum_{i=1}^{n}\langle f(C_i)x_i, x_i\rangle\leq f(m)+f(M).
\end{equation}
\end{theorem}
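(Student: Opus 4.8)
The plan is to imitate the proof of Theorem \ref{thm3}, replacing the normalization $\|x\|=1$ by the condition $\sum_{i=1}^{n}\|x_i\|^2=1$. First I would record the pointwise inequality that drives Theorem \ref{thm3}: for every $u\in[m,M]$ one has $u=\frac{M-u}{M-m}m+\frac{u-m}{M-m}M$, a convex combination of the endpoints, so the $P$-class property \eqref{pi-eq} gives $f(u)\le f(m)+f(M)$. Since $Sp(C_i)\subseteq[m,M]$ for each $i$, the functional calculus then yields the operator inequality $f(C_i)\le (f(m)+f(M))I$ for every $i\in\{1,\dots,n\}$.

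Next I would pair this with the vectors $x_i$: taking inner products, $\langle f(C_i)x_i,x_i\rangle\le (f(m)+f(M))\langle x_i,x_i\rangle=(f(m)+f(M))\|x_i\|^2$. Summing over $i$ and using $\sum_{i=1}^{n}\|x_i\|^2=1$ produces
\[
\sum_{i=1}^{n}\langle f(C_i)x_i,x_i\rangle\le (f(m)+f(M))\sum_{i=1}^{n}\|x_i\|^2=f(m)+f(M),
\]
which is precisely the asserted inequality.

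Alternatively, one can deduce the statement directly from Theorem \ref{thm3} by the block-diagonal device already used in the proof of Theorem \ref{M-Mond-pclass}: put $\tilde C=\mathrm{diag}(C_1,\dots,C_n)$ and $\tilde x=(x_1,\dots,x_n)^{\mathsf T}$, so that $Sp(\tilde C)\subseteq[m,M]$, $\|\tilde x\|=1$, and $\langle f(\tilde C)\tilde x,\tilde x\rangle=\sum_{i=1}^{n}\langle f(C_i)x_i,x_i\rangle$; applying Theorem \ref{thm3} to $\tilde C$ and $\tilde x$ then closes the argument. Either way there is no genuine obstacle: the only points deserving a word of care are that $f(\tilde C)=\mathrm{diag}(f(C_1),\dots,f(C_n))$ and $Sp(\tilde C)=\bigcup_{i} Sp(C_i)$, both standard facts about orthogonal direct sums, so the substance of the theorem is entirely inherited from Theorem \ref{thm3}.
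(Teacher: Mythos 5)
Your proposal is correct, and your ``alternative'' route---the block-diagonal reduction via $\tilde C$ and $\tilde x$ followed by an appeal to Theorem \ref{thm3}---is exactly the paper's proof. Your primary direct argument is just an unpacked version of the same idea (the pointwise bound $f(u)\le f(m)+f(M)$ plus summation against the $x_i$), so there is nothing substantively different to compare.
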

\begin{proof}
Consider $\tilde{C}$ and $\tilde{x}$ as in the proof of Theorem \ref{M-Mond-pclass}
and apply Theorem \ref{thm3}.
\end{proof}

%
\begin{theorem}
Let the conditions of Theorem \ref{M-Mond-pclass} be satisfied.
Let $J$ be an interval such that $f([m, M])\subset J$. 
If $F(u, v)$ is a real function defined on $J\times J$ and non--decreasing in $u$, then
\begin{align}
F\Big(2\sum_{i=1}^{n}\langle f(C_i)x_i,x_i\rangle,f\Big(\sum_{i=1}^{n}\langle C_ix_i,x_i\rangle\Big)\Big)
&\leq\max_{t\in[m,M]}F(2(f(m)+f(M)),f(t)).
\end{align}
\end{theorem}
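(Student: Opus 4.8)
The plan is to reduce the multiple–operator inequality to the single–operator statement of Theorem \ref{thm4} by means of the same block–diagonal (direct–sum) construction already employed in the proof of Theorem \ref{M-Mond-pclass}; after that, the conclusion drops out by pure substitution, with no genuine extra work.

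Concretely, I would set
$$
\tilde{C}=\left(
\begin{array}{lll}
C_1 & \cdots & 0\\
\vdots & \ddots & \vdots\\
0 & \cdots & C_n
\end{array}
\right),\qquad
\tilde{x}=\left(
\begin{array}{l}
x_1\\
\vdots\\
x_n
\end{array}
\right)
$$
on the Hilbert space $\mathcal{H}\oplus\cdots\oplus\mathcal{H}$. As noted in the proof of Theorem \ref{M-Mond-pclass}, $\tilde{C}$ is self--adjoint with $Sp(\tilde{C})=\bigcup_{i=1}^{n}Sp(C_i)\subseteq[m,M]$, and $\|\tilde{x}\|^{2}=\sum_{i=1}^{n}\|x_i\|^{2}=1$. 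Since the functional calculus acts blockwise on a finite orthogonal direct sum, $f(\tilde{C})=\mathrm{diag}(f(C_1),\dots,f(C_n))$, so that
$$
f(\langle\tilde{C}\tilde{x},\tilde{x}\rangle)=f\Big(\sum_{i=1}^{n}\langle C_ix_i,x_i\rangle\Big),\qquad
\langle f(\tilde{C})\tilde{x},\tilde{x}\rangle=\sum_{i=1}^{n}\langle f(C_i)x_i,x_i\rangle.
$$

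Next I would simply apply Theorem \ref{thm4} to the operator $\tilde{C}$ and the unit vector $\tilde{x}$: the interval hypothesis $f([m,M])\subset J$ is untouched because $\tilde{C}$ has spectrum in the very same $[m,M]$, and $F$ is the same function, still non--decreasing in its first variable. This gives
$$
F\big(2\langle f(\tilde{C})\tilde{x},\tilde{x}\rangle,\,f(\langle\tilde{C}\tilde{x},\tilde{x}\rangle)\big)\le\max_{t\in[m,M]}F\big(2(f(m)+f(M)),f(t)\big),
$$
and inserting the two identities displayed above yields precisely the asserted inequality.

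I do not expect any real obstacle: the whole argument is a verbatim transfer of Theorem \ref{thm4} through the direct--sum dilation, exactly as Theorem \ref{Multi-thm3} was obtained from Theorem \ref{thm3}. The only two points deserving a line of care are that the spectrum of the block operator is the union of the individual spectra (so the bracket/partition argument underlying Theorems \ref{thm3}--\ref{thm4} still operates on the same $[m,M]$) and that the functional calculus is block--diagonal, which is immediate for a finite orthogonal sum. For the tightest write--up one could even dispense with re--deriving anything and merely state that the result ``follows from Theorem \ref{thm4} applied to $\tilde{C}$ and $\tilde{x}$,'' in the same spirit as the proofs of Theorems \ref{M-Mond-pclass} and \ref{Multi-thm3}.
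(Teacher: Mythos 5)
Your proposal is correct and is exactly the paper's own argument: the authors likewise form the block-diagonal $\tilde{C}$ and the direct-sum vector $\tilde{x}$ from the proof of Theorem \ref{M-Mond-pclass} and then invoke Theorem \ref{thm4}. Your write-up merely fills in the routine verifications (spectrum of the direct sum, blockwise functional calculus) that the paper leaves implicit.
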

\begin{proof}
Consider $\tilde{C}$ and $\tilde{x}$ as in the proof of Theorem \ref{M-Mond-pclass}
and apply Theorem \ref{thm4}.
\end{proof}

\begin{corollary} 
Let the conditions of Theorem \ref{M-Mond-pclass} be satisfied. Then
\begin{itemize}
\item[(i)] the inequality
\begin{equation}
2\sum_{i=1}^{n}\langle f(C_i)x_i,x_i\rangle\leq \lambda f(\langle \sum_{i=1}^{n}C_ix_i, x_i\rangle)
\end{equation}
holds for some $\lambda>0$,
\item[(ii)] the inequality
\begin{equation}
2\sum_{i=1}^{n}\langle f(C_i)x_i,x_i\rangle\leq \lambda +f(\langle \sum_{i=1}^{n}C_ix_i, x_i\rangle)
\end{equation}
holds for some $\lambda\in\Bbb{R}$.
\end{itemize}
\end{corollary}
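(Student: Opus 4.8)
The plan is to deduce this from the single-operator Corollary~\ref{cor4} by exactly the block-diagonal reduction used in the proof of Theorem~\ref{M-Mond-pclass}. First I would set
$$
\tilde C=\left(\begin{array}{lll} C_1 & \cdots & 0\\ \vdots & \ddots & \vdots\\ 0 & \cdots & C_n\end{array}\right)
\qquad\text{and}\qquad
\tilde x=\left(\begin{array}{l} x_1\\ \vdots\\ x_n\end{array}\right),
$$
acting on the direct sum of $n$ copies of $\mathcal H$. As in that proof, $\tilde C$ is self-adjoint with $Sp(\tilde C)=\bigcup_{i=1}^n Sp(C_i)\subseteq[m,M]$, one has $\|\tilde x\|^2=\sum_{i=1}^n\|x_i\|^2=1$, and by the functional calculus $f(\tilde C)$ is block-diagonal with blocks $f(C_i)$, so that
$$
f(\langle\tilde C\tilde x,\tilde x\rangle)=f\Big(\sum_{i=1}^n\langle C_ix_i,x_i\rangle\Big)
\qquad\text{and}\qquad
\langle f(\tilde C)\tilde x,\tilde x\rangle=\sum_{i=1}^n\langle f(C_i)x_i,x_i\rangle .
$$

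With these identities in hand, both parts follow by applying Corollary~\ref{cor4} to the triple $(\tilde C,\tilde x,f)$ on the interval $[m,M]$. Part~(i) of that corollary yields
$$
2\sum_{i=1}^n\langle f(C_i)x_i,x_i\rangle
= 2\langle f(\tilde C)\tilde x,\tilde x\rangle
\le \lambda\, f(\langle\tilde C\tilde x,\tilde x\rangle)
= \lambda\, f\Big(\sum_{i=1}^n\langle C_ix_i,x_i\rangle\Big)
$$
with $\lambda=\dfrac{2(f(m)+f(M))}{\min_{t\in[m,M]}f(t)}>0$, while part~(ii) gives the additive form with $\lambda=2(f(m)+f(M))-\min_{t\in[m,M]}f(t)\in\Bbb R$. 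Reading $\langle\sum_{i=1}^nC_ix_i,x_i\rangle$ as $\sum_{i=1}^n\langle C_ix_i,x_i\rangle$, these are precisely the asserted inequalities.

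There is no genuine obstacle here; the argument is the same short invocation of the block-diagonal construction already used for Theorem~\ref{Multi-thm3} and the surrounding multiple-operator corollaries. The only point that deserves a line of justification is that the hypotheses of Corollary~\ref{cor4} --- equivalently those of Theorem~\ref{thm1} --- transfer to $(\tilde C,\tilde x,f)$: the ambient interval is still $[m,M]$, so the extremal constants $f(m)+f(M)$ and $\min_{t\in[m,M]}f(t)$ entering $\lambda$ are unchanged, the latter being attained by continuity of $f$ on the compact interval $[m,M]$ and nonnegative because $f$ is $P$-class (and, for part~(i), positive in the standing setting of Corollary~\ref{cor4}, so that $\lambda$ is a legitimate positive scalar). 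Once this is noted, substituting the two displayed identities into the conclusion of Corollary~\ref{cor4} completes the proof.
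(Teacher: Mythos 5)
Your proof is correct and follows exactly the paper's route: the same block-diagonal reduction to $(\tilde C,\tilde x)$ from the proof of Theorem~\ref{M-Mond-pclass}, followed by an application of Corollary~\ref{cor4} with the same explicit choices of $\lambda$ in both parts.
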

\begin{proof}
Consider $\tilde{C}$ and $\tilde{x}$ as in the proof of Theorem \ref{M-Mond-pclass}.

(i) Apply Corollary \ref{cor4} (i) and note that $\lambda=\frac{2(f(m)+f(M))}{\min_{t\in[m,M]}f(t)}$.

(ii) Apply Corollary \ref{cor4} (ii) and note that $\lambda=2(f(m)+f(M))-\min_{t\in[m,M]}f(t)$.
\end{proof}

\bibliographystyle{amsplain}

\end{document}